\numberwithin{equation}{section}
  \newtheorem{theorem}{Theorem}[section]
  \newtheorem*{conjecture}{Conjecture}
\newtheorem{proposition}{Proposition}[section]
\theoremstyle{remark}
\newtheorem{remark}{Remark}[section]
\title{On $O(p)\times O(q)$-invariant constant mean curvature hypersurfaces with singularity}
\author{Hil\'ario Alencar, Ronaldo Garcia, and Greg\'orio Silva Neto}
\begin{document}

\maketitle
\begin{abstract}

We classify the $O(p)\times O(q)$-invariant constant mean curvature hypersurfaces   with singularity at the origin, solving a conjecture of Wu-yi Hsiang.

    \vskip .2cm
\noindent\textbf{Keywords} {rotational hypersurface, mean curvature, minimal hypersurface, principal curvatures, singular point.}
\vskip .2cm
\noindent \textbf{MSC} {53A10 \and 53C42 \and 34A34 \and 34C45   }
    
\end{abstract}

\section{Introduction}

In 1982, Wu-Yi Hsiang \cite{hsiang} started the systematic study of constant mean curvature hypersurfaces in Euclidean space $\mathbb{R}^{p+q}$, invariant by the action of the group $O(p)\times O(q),$ $p,q\geq2.$ Using techniques of equivariant geometry, Hsiang proved that the profile curves $\gamma(t)=(x(t),y(t))$ generating constant mean curvature $H$ hypersurfaces of $\mathbb{R}^{p+q}$ invariant by the action of the group $O(p)\times O(q),$ lies in the region $Q_1=\{(x,y):x>0,y>0\}\subset\mathbb{R}^2$ and satisfy the ordinary differential equation
\begin{equation}\label{eq:mean-H0}
\aligned
(p+q-1)H &= \frac{x'(t)y''(t)-x''(t)y'(t)}{(W(t))^3}+(q-1)\frac{y'(t)}{x(t)W(t)}\\
&\quad- (p-1)\frac{x'(t)}{y(t)W(t)},
\endaligned
\end{equation}
where $W(t)=\sqrt{(x'(t))^2+(y'(t))^2}$ is the arc length of $\gamma.$ A solution $\gamma(t)=(x(t),y(t))$ of \eqref{eq:mean-H0} is said a global solution curve if $-\infty<t<\infty$ and $\gamma$ is infinitely extendable in both directions.

Analyzing systematically the extended solutions of \eqref{eq:mean-H0}, Hsiang proved that

\begin{itemize}
    \item[(i)] There are two straight lines $x=\frac{q-1}{H(p+q-1)}$ and $y=\frac{p-1}{H(p+q-1)},$ whose inverse images are cylinders of type $\mathbb{R}^p\times\mathbb{S}^{q-1}$ and $\mathbb{S}^{p-1}\times\mathbb{R}^{q},$ respectively.

    \item[(ii)] Any given global solution of \eqref{eq:mean-H0} is asymptotic to the lines $x=\frac{q-1}{H(p+q-1)}$ when $t\to -\infty$ and $y=\frac{p-1}{H(p+q-1)},$ when $t\to \infty.$ 

    \item[(iii)] Each global solution of \eqref{eq:mean-H0} can have at most one cusp point (singular point) on each axis $x=0$ or $y=0$. Therefore, Hsiang classified  the global solution curves of \eqref{eq:mean-H0} in the following types, namely
\begin{itemize}
    \item[Type A.] With no cusp point.
    \item[Type B.] With exactly one cusp point on the $x$-axis.
    \item[Type C.] With exactly one cusp point on the $y$-axis.
    \item[Type D.] With exactly two cusps (which must be one on each axis).
    \item[Type E.] With exactly one cusp at the origin.
\end{itemize}
\end{itemize}

A local view of the extended singular solutions asymptotic to the coordinate axes are as shown in Fig. \ref{fig:singular_axis}.

 \begin{figure}[H]
	\begin{center}
  \def\svgwidth{1.0\textwidth}
 	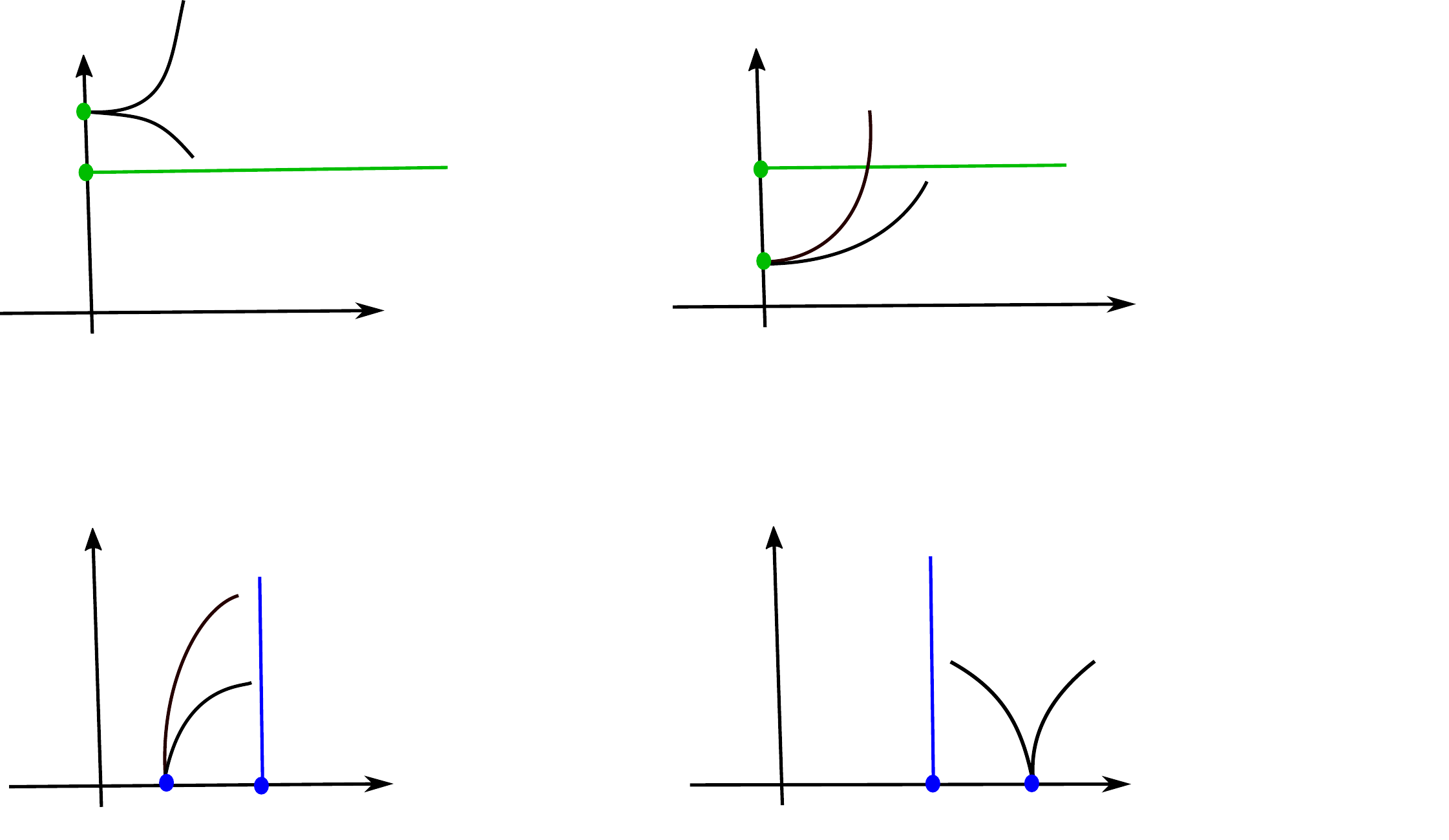
 	 	\vskip .5cm
		\caption{Singular extended solutions asymptotic to the coordinate axes.   }
		\label{fig:singular_axis}
	\end{center}
\end{figure}

About the global solutions with exactly one cusp at the origin (i.e., Type E solutions), Hsiang conjectured that (see \cite[page 353]{hsiang})

\begin{conjecture}\label{conj-Hsiang}
There is only one global solution curve of \eqref{eq:mean-H0} with one cusp point at the origin. i.e., there is only one constant mean curvature $H$ hypersurface of $\mathbb{R}^{p+q},$ $p,q\geq2,$ invariant by the action of the group $O(p)\times O(q)$ and with singularity at the origin.
\end{conjecture}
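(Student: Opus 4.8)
The plan is to reduce the conjecture to a local uniqueness statement at the singular point and then propagate it by ordinary uniqueness for the regular ODE. First I would put \eqref{eq:mean-H0} in the arc-length gauge $W\equiv 1$, so that writing $x'=\cos\sigma$, $y'=\sin\sigma$ it becomes the first-order system
\begin{equation*}
x'=\cos\sigma,\qquad y'=\sin\sigma,\qquad \sigma'=(p+q-1)H-(q-1)\frac{\sin\sigma}{x}+(p-1)\frac{\cos\sigma}{y}.
\end{equation*}
Since the rescaling $\gamma\mapsto\lambda\,\gamma(\cdot/\lambda)$ sends an $H$-solution to an $(H/\lambda)$-solution, there is no scaling freedom once $H$ is fixed, and it suffices to show that there is a unique germ of solution issuing from the origin: two global Type~E solutions sharing the same cusp germ at the origin must coincide on their whole domain, by uniqueness for the ODE at regular points (away from the axes the right-hand side is smooth). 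Existence of a Type~E solution being part of Hsiang's classification (iii), only this rigidity needs proof.

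Near the origin I would desingularize by passing to the graph $y=y(x)$ and the scale-invariant variables $v=y/x$, $u=\tfrac{dy}{dx}$, with logarithmic time $\tau=\ln x$ (so $x\to 0^+$ corresponds to $\tau\to-\infty$). A direct computation turns \eqref{eq:mean-H0} into the non-autonomous system
\begin{equation*}
\frac{dv}{d\tau}=u-v,\qquad \frac{du}{d\tau}=-(q-1)u(1+u^2)+(p-1)\frac{1+u^2}{v}+(p+q-1)H(1+u^2)^{3/2}e^{\tau},
\end{equation*}
whose forcing term is $O(e^\tau)\to 0$ as $\tau\to-\infty$. The limiting autonomous system has the relevant equilibrium $(v,u)=(m,m)$ with $m=\sqrt{(p-1)/(q-1)}$, which is precisely the minimal cone through the origin; this identifies the tangent cone of any cusp at the origin.

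The heart of the argument is the linearization at $(m,m)$. Using $(q-1)(1+m^2)=p+q-2$, its Jacobian is
\begin{equation*}
J=\begin{pmatrix}-1 & 1\\ -(p+q-2) & -(p+q-2)\end{pmatrix},
\end{equation*}
with $\operatorname{tr}J=-(p+q-1)<0$ and $\det J=2(p+q-2)>0$, so both eigenvalues have strictly negative real part; correspondingly, the indicial roots of the associated Euler equation $w''+\tfrac{p+q-2}{x}w'+\tfrac{p+q-2}{x^2}w=0$ also have negative real part. Hence the equilibrium is asymptotically stable in forward $\tau$-time, and every nontrivial homogeneous mode \emph{grows} as $\tau\to-\infty$. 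Therefore the only solution that converges to $(m,m)$ as $\tau\to-\infty$ is the forced trajectory $z_p(\tau)=\int_{-\infty}^{\tau}e^{J(\tau-s)}g(s)\,ds$ responding to the $O(e^\tau)$ term, which is $\propto e^\tau\to 0$; adding any homogeneous mode would make the solution blow up. This forced trajectory is uniquely determined by $H$, giving a unique Type~E germ, which propagates to a unique global solution.

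The main obstacle is the rigorous passage from this linear picture to the full nonlinear system, together with the structural facts that legitimize the reduction: that a Type~E solution is genuinely a graph over one axis near the cusp, that it approaches the origin tangent to the cone $(m,m)$ rather than to a coordinate axis or with a vertical tangent, and that the two branches meeting at the cusp are each captured by the same germ. The delicate regime is $p+q\le 7$, where the discriminant $(p+q-3)^2-4(p+q-2)$ is negative and the eigenvalues of $J$ are complex: the correction to the cone then spirals, and although its amplitude still grows like $e^{-\operatorname{Re}(\lambda)\tau}$ as $\tau\to-\infty$ (so that no spiralling solution can actually limit onto the cone), establishing this for the full nonlinear, non-autonomous problem—and excluding solutions that wind around the cone infinitely often while creeping toward the origin—requires a careful asymptotic integration, for instance a Banach fixed-point argument on a weighted space or a Lyapunov/Sturm comparison adapted to the focus case. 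Once this rigidity at the singular origin is in place, uniqueness of the global Type~E solution follows.
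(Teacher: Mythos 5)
Your route is genuinely different from the paper's. The paper regularizes the field to $Y=xyX$, performs a cylindrical blow-up along the $\theta$-axis (Proposition \ref{prop:Y-tilde}), and obtains uniqueness from invariant-manifold theory in dimension three: $P_5$ and $P_6$ are hyperbolic saddles of $\tilde Y$ whose invariant manifolds transverse to the divisor $\{r=0\}$ are one-dimensional, and these are the \emph{only} orbits in $\{r>0\}$ asymptotic to the origin (Propositions \ref{prop:WuP5}, \ref{prop:WsP6}, \ref{prop:sing_normally}). You instead revive the Bombieri--De Giorgi--Giusti-type two-dimensional reduction that the paper's Remark explicitly says is blocked by the lack of scaling invariance, by treating the broken symmetry as an $O(e^{\tau})$ forcing of the minimal-cone system; this is a nice idea, and your computations are correct: your matrix $J$ is, up to the positive factor $\lambda_0=\sin\alpha_0\cos\alpha_0$, the paper's $D\tilde Y_0(p_5)$, your discriminant $(p+q-3)^2-4(p+q-2)$ equals the paper's $(p+q)^2-10(p+q)+17$, and the weighted fixed-point argument you sketch for the forced system is a standard Lyapunov--Perron-type construction that can be carried out.

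However, the step you defer as ``the main obstacle'' is not a technicality: it is the actual content of the conjecture, and it is precisely what the paper's three-dimensional blow-up delivers and your two-dimensional reduction cannot see. Your argument proves only a \emph{conditional} statement: among curves reaching the origin as graphs over $x$ with $(y/x,\,dy/dx)\to(m,m)$, there is at most one for each orientation. It does not show that an arbitrary global solution of \eqref{eq:mean-H0} with a cusp at the origin is of this form; approach tangent to a coordinate axis, non-existence of a limiting tangent direction (spiraling in $\theta$), or oscillation of $y/x$ are never excluded. In the paper this exclusion is the combined force of Propositions \ref{prop:6singular_points} and \ref{prop:sing_normally}: an orbit with $r>0$ accumulating on the origin must, after blow-up, converge to one of the six singular points on the divisor (the phase portrait of $\tilde Y_0$ has no cycles), and the stable/unstable fibers over $p_1,\dots,p_4$ at $r=0$ lie inside the divisor, so only $W^u(P_5)$ and $W^s(P_6)$ survive. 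A second, smaller error: your expectation that the two branches of the cusp are ``captured by the same germ'' is false. At a cusp the orientation reverses, so written as graphs over increasing $x$ the two branches satisfy your forced system with forcings of \emph{opposite} signs, $\pm(p+q-1)H(1+u^2)^{3/2}e^{\tau}$; they are genuinely different curves, as reflected in the paper's conclusion $k_s=-k_u\neq k_u$ (Proposition \ref{prop:projecao_WsWu}). The fix is to run your fixed-point argument once for each sign, obtaining one branch from each; but without the tangency/graph statement above, the proof remains incomplete.
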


The purpose of this paper is to prove this conjecture. Namely, we prove

\begin{theorem}\label{thm:main-1}
    There is only one hypersurface of $\, \mathbb{R}^{p+q}$ invariant by $O(p)\times O(q),$ with constant mean curvature $H\ne 0$, whose generating curve is a global solution of \eqref{eq:mean-H0} with one cusp point at the origin. Moreover,
    \begin{itemize}
        \item[(i)] the topological type of the hypersurface, which is singular at the origin, is $(\mathbb{R}\setminus\{0\})\times\mathbb{S}^{p-1}\times\mathbb{S}^{q-1};$
        \item[(ii)] the profile curve is given by $W^s\cup W^u\cup\{0\},$ where the two branches $W^u$ and $W^s$ of the curve are smooth and tangent to the line $y=(\tan\alpha_0) x$ at the origin (see Figure \ref{fig:main-theorem}), where  
 \[
 \tan\alpha_0=\sqrt{ \frac{p-1}{q-1}}
 \]
 and the curvatures of $W^u$ and $W^s$ at the origin are given, respectively, by
 \[
 k_u=\frac{2 H(p + q - 1)}{ 3p + 3q - 4}
 \]
and $k_s=-k_u<0.$
    \end{itemize}
\end{theorem}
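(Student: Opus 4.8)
The plan is to recast \eqref{eq:mean-H0} as an autonomous system, desingularize it at the origin, and then read off every assertion from a hyperbolic fixed-point analysis. Parametrizing $\gamma$ by arc length, so that $x'=\cos\theta$, $y'=\sin\theta$ and $W\equiv 1$, equation \eqref{eq:mean-H0} becomes
$$\theta'=(p+q-1)H-(q-1)\frac{\sin\theta}{x}+(p-1)\frac{\cos\theta}{y},$$
coupled with $x'=\cos\theta$, $y'=\sin\theta$. I would then pass to polar coordinates $x=\rho\cos\phi$, $y=\rho\sin\phi$ in $Q_1$ and set $\psi=\theta-\phi$, the angle between the tangent and the position vector, which gives $\rho'=\cos\psi$, $\rho\phi'=\sin\psi$ and, after substituting the expressions for $\sin\theta/\cos\phi$ and $\cos\theta/\sin\phi$,
$$\psi'=(p+q-1)H-\frac{1}{\rho}\Big[\cos\psi\,g(\phi)+(p+q-1)\sin\psi\Big],\qquad g(\phi):=(q-1)\tan\phi-(p-1)\cot\phi.$$
Rescaling the independent variable by $d\tau=dt/\rho$ clears the $1/\rho$ singularity and yields the smooth system $\dot\rho=\rho\cos\psi$, $\dot\phi=\sin\psi$, $\dot\psi=\rho(p+q-1)H-\cos\psi\,g(\phi)-(p+q-1)\sin\psi$, for which the plane $\{\rho=0\}$ is invariant.

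The finiteness of $\dot\psi$ as $\rho\to 0$ forces $g(\phi)=0$ and $\sin\psi=0$; since $g$ is strictly increasing on $(0,\pi/2)$ this singles out the unique angle $\alpha_0$ with $\tan^2\alpha_0=(p-1)/(q-1)$, and produces exactly two fixed points in $\{\rho=0\}$, namely $P_u=(0,\alpha_0,0)$ and $P_s=(0,\alpha_0,\pi)$. I would next linearize. Using $g'(\alpha_0)=2(p+q-2)$, the Jacobian at $P_u$ has eigenvalues $1$ and the two roots of $\lambda^2+(p+q-1)\lambda+2(p+q-2)=0$, both with negative real part; crucially, the eigenvectors of the latter satisfy $v_\rho=0$, so that the two-dimensional stable manifold of $P_u$ lies in the invariant plane $\{\rho=0\}$ and the only orbit entering $\{\rho>0\}$ is the one-dimensional unstable manifold $W^u$. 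Under the reflection $\psi\mapsto\psi+\pi$, $\tau\mapsto-\tau$ the point $P_s$ carries, symmetrically, a one-dimensional stable manifold $W^s$ into $\{\rho>0\}$ and a two-dimensional unstable manifold inside $\{\rho=0\}$. These two transverse orbits are the two branches of the profile curve.

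To extract the geometry I would compute the relevant eigenvectors. The unstable direction at $P_u$ is $(1,\mu,\mu)$ with
$$\mu=\frac{(p+q-1)H}{g'(\alpha_0)+(p+q)}=\frac{(p+q-1)H}{3p+3q-4}.$$
Because both branches approach with $\phi\to\alpha_0$, they are tangent at the origin to the line $y=(\tan\alpha_0)x$. Moreover, along $W^u$ one has $\theta-\alpha_0=(\phi-\alpha_0)+\psi\sim 2\mu\rho$ together with $d\rho/dt=\cos\psi\to 1$, so the curvature at the origin is $k_u=d\theta/dt\to 2\mu=2H(p+q-1)/(3p+3q-4)$, and the reflection symmetry gives $k_s=-k_u$ on $W^s$. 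The smoothness of $W^u$ and $W^s$ is inherited from the stable/unstable manifold theorem.

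Finally, for uniqueness, existence, and the topological type I would argue that any global solution with a cusp at the origin must, near the origin, be an orbit of the desingularized flow with $\rho\to 0$; such an orbit limits to an invariant set in $\{\rho=0\}$, which a phase-plane analysis there (using the monotonicity of $\phi$ off $\alpha_0$) shows can only be $P_u$ or $P_s$. Since the sole orbits meeting $\{\rho>0\}$ near these points are $W^u$ (outgoing) and $W^s$ (incoming), the incoming and outgoing branches are forced; ODE uniqueness together with Hsiang's items (i)--(iii) then extends them to a single global curve, which establishes both existence and uniqueness. The generating hypersurface is the $O(p)\times O(q)$-orbit of this curve: over the regular arc $W^s\cup W^u$ the fibre is $\mathbb{S}^{p-1}\times\mathbb{S}^{q-1}$, and the two ends collapse to the single singular point at the origin, giving topological type $(\mathbb{R}\setminus\{0\})\times\mathbb{S}^{p-1}\times\mathbb{S}^{q-1}$. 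I expect the main obstacle to be precisely this last step, namely ruling out any non-fixed-point limiting behaviour of cusp-forming orbits inside $\{\rho=0\}$, whereas the eigenvalue computation yielding $\alpha_0$ and $k_u$ is a direct linearization.
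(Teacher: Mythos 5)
Your desingularization is, up to a change of chart, the same construction the paper uses, and your local computations at the two interior equilibria are correct: your coordinates $(\rho,\phi,\psi)$ with $\psi=\theta-\phi$ and the rescaling $d\tau=dt/\rho$ differ from the paper's field $\tilde Y=\frac{1}{r}R_{*}(xyX)$ of Proposition \ref{prop:Y-tilde} only by the positive interior factor $\sin\alpha\cos\alpha$; your $g'(\alpha_0)=2(p+q-2)$, your eigenvalue structure ($1$ together with the roots of $\lambda^2+(p+q-1)\lambda+2(p+q-2)=0$, discriminant $(p+q)^2-10(p+q)+17$ as in the paper), your $\mu=H(p+q-1)/(3p+3q-4)$ (the paper's $l_1$ in Proposition \ref{prop:WuP5}), and $k_u=2\mu$, $\tan\alpha_0=\sqrt{(p-1)/(q-1)}$ all match Propositions \ref{prop:6singular_points}--\ref{prop:projecao_WsWu}. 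One caveat: the map $\psi\mapsto\psi+\pi$, $\tau\mapsto-\tau$ is \emph{not} a symmetry of your system when $H\neq0$ — it flips the sign of the term $\rho(p+q-1)H$, i.e.\ it conjugates the flow for $H$ to the flow for $-H$. The conclusions you draw at $P_s$ are still true, but you should either linearize there directly (eigenvalues $-1$ and the roots of $\lambda^2-(p+q-1)\lambda+2(p+q-2)=0$, stable eigenvector $(1,-\mu,-\mu)$), or use the reflection together with $H\mapsto-H$ and the observation that the projection to the $xy$-plane ignores $\psi$; either route gives $k_s=-k_u$.

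The genuine gap is the one you flag yourself, and it is exactly where the paper's multiplication by $xy$ before blowing up earns its keep. Your rescaled field is singular on the walls $\phi=0$ and $\phi=\pi/2$, because $g(\phi)=(q-1)\tan\phi-(p-1)\cot\phi$ blows up there; consequently the ``phase-plane analysis in $\{\rho=0\}$'' cannot be performed on the closed strip, and the ``monotonicity of $\phi$ off $\alpha_0$'' you invoke is false in any case, since $\dot\phi=\sin\psi$ changes sign with $\psi$, not with $\phi-\alpha_0$. This is not a technicality: the orbits you must exclude — Hsiang's solutions with cusps on the coordinate axes (types B, C, D) — approach precisely the region $\phi\in\{0,\pi/2\}$ where your chart degenerates, so ruling out that an origin-asymptotic orbit accumulates on the walls requires control there. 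The paper obtains this control by regularizing first ($Y=xyX$), so that after the blow-up $\tilde Y$ is smooth up to the boundary of the strip; the walls then carry four additional saddles $p_1,\dots,p_4$ of the divisor flow, whose lifts are lines of zeros $\ell_i=\{(r,p_i)\}$ that are normally hyperbolic of saddle type (Proposition \ref{prop:sing_normally}). The invariant surfaces $W^s(\ell_i)$, $W^u(\ell_i)$ are fibered by orbits asymptotic to individual points of $\ell_i$, and the fibers over $r=0$ lie inside the divisor; this is what forces the limit set of any origin-asymptotic orbit to be $P_5$ or $P_6$ alone, hence uniqueness. Without an equivalent desingularization along the axes (the paper's factor $xy$, or separate directional blow-ups at the two corners), your final step — and with it the uniqueness assertion that is the heart of Theorem \ref{thm:main-1} — remains unproved.
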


\begin{figure}[H]
% usar pacote float
	\begin{center}
		\def\svgwidth{0.9\textwidth} %0.1 escala (deve ser ajustado)
		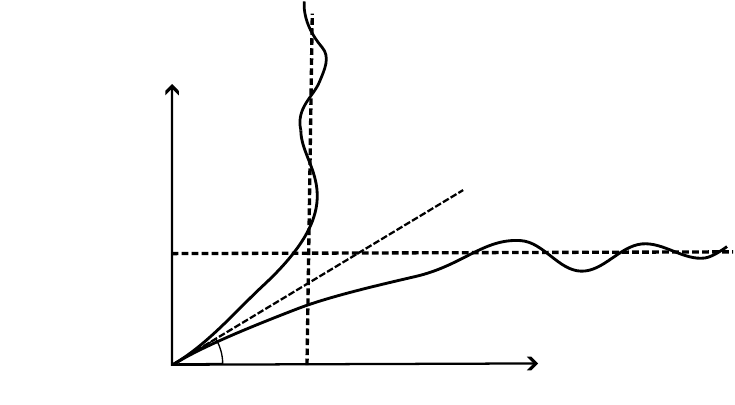
		\caption{Sketch of the generating curve of the   solution with one cusp point (singular point) at the origin.}\label{fig:main-theorem}
	\end{center}
\end{figure}

%\begin{remark} The term cusp is borrow from \cite{hsiang}. In fact, the origin is a singular point of the global solution which is  the juxtaposition of the regular arcs of curves $W^s$ and $W^u$.
%\end{remark}

If the hypersurface is minimal, i.e., $H=0,$ we have
\begin{theorem}\label{thm:main-2}
 The only minimal hypersurface of $\mathbb{R}^{p+q},$ invariant by $O(p)\times O(q),$ with one cusp point at the origin, is the cone given by
 \[ C=\left\{ (U,V)\in \mathbb{R}^p\times\mathbb{R}^q ;(q-1)|V|^2=(p-1)|U|^2\right\}.\]
% The intersection of $C$ with the unitary sphere
% $|U|^2+|V|^2=1$ is 
% \[ \left(\sqrt{\frac{q-1}{p+q-2} }\right)\mathbb{S}^{p-1}\times
% \left(\sqrt{\frac{p-1}{p+q-2} } \right)\mathbb{S}^{q-1}.\]
Moreover, this minimal cone has profile curve $y=(\tan\alpha_0)x=\sqrt{\frac{p-1}{q-1}}x$ and principal curvatures given by
 \[ \lambda_1(t)=0,\;\; \lambda_2(t)=\frac{1}{t}\sqrt{\frac{p-1}{q-1}}, \;\; \lambda_3(t)=- \frac{1}{t}\sqrt{\frac{q-1}{p-1}}. \]
\end{theorem}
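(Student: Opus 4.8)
The plan is to split the statement into a short computational part (exhibiting the cone and its curvatures) and the substantial uniqueness part, which is where the real work lies. For the computational part I would set $H=0$ in \eqref{eq:mean-H0} and insert the ray $y=mx$. Since a straight line has vanishing curvature, the term $(x'y''-x''y')/W^{3}$ drops out and the remaining relation collapses to the single algebraic equation $(q-1)m^{2}=p-1$, whose positive root is $m=\tan\alpha_0=\sqrt{(p-1)/(q-1)}$; this exhibits the cone $C$ as a solution and pins down its profile. Parametrizing the ray by arclength, $x(t)=t\cos\alpha_0$ and $y(t)=t\sin\alpha_0$, and feeding this into the three principal-curvature expressions $(x'y''-x''y')/W^{3}$, $y'/(xW)$ and $-x'/(yW)$ coming from the equivariant second fundamental form, I would read off $\lambda_1\equiv 0$ together with the stated $\lambda_2$ and $\lambda_3$, and verify $(q-1)\lambda_2+(p-1)\lambda_3\equiv 0$, which re-confirms minimality.

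The heart of the matter is uniqueness, and here I would exploit that the $H=0$ equation is invariant under the dilations $(x,y)\mapsto(\lambda x,\lambda y)$, each term of \eqref{eq:mean-H0} being homogeneous of degree $-1$. This invites scale-invariant variables. Writing $x=r\cos\phi$, $y=r\sin\phi$, letting $\theta$ be the tangent angle of the profile, introducing $\psi=\theta-\phi$ (the angle between the curve and the radial direction), and reparametrizing by $d\tau=dt/r$, the arclength relations $r'=\cos\psi$, $r\phi'=\sin\psi$ together with \eqref{eq:mean-H0} turn the system into the planar autonomous system
\begin{align*}
\frac{d\phi}{d\tau}&=\sin\psi,\\
\frac{d\psi}{d\tau}&=(p-1)\frac{\cos(\phi+\psi)}{\sin\phi}-(q-1)\frac{\sin(\phi+\psi)}{\cos\phi}-\sin\psi,
\end{align*}
with the radius recovered from $dr/d\tau=r\cos\psi$. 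The decisive feature is that the $(\phi,\psi)$-dynamics decouple from $r$, so the singular point of the original ODE at the origin is resolved into an interior equilibrium of a regular planar flow.

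A direct check shows that $(\phi,\psi)=(\alpha_0,0)$ is the unique equilibrium in the strip $0<\phi<\pi/2$ (it is exactly the cone), and that its linearization has trace $-(p+q-1)$ and determinant $2(p+q-2)$, hence two eigenvalues with negative real part; the equilibrium is therefore asymptotically stable as $\tau\to+\infty$. Now a profile with a singularity at the origin is precisely one whose trajectory has $r\to 0$; since $dr/d\tau=r\cos\psi$ forces $r\sim e^{\tau}$ near the equilibrium, $r\to 0$ means $\tau\to-\infty$, i.e. the trajectory must converge to $(\alpha_0,0)$ backward in time. Because a sink has no unstable manifold, the only such trajectory is the equilibrium itself, which is the cone, giving uniqueness.

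I expect the main obstacle to be exactly this singular-point issue: the coefficients of \eqref{eq:mean-H0} blow up at the origin, so ODE uniqueness cannot be invoked directly, and the entire force of the argument is in the blow-up $(\phi,\psi,\tau)$ that converts the cusp at the origin into a hyperbolic-type limit. Two points I would treat with care. First, that \emph{singularity at the origin} genuinely corresponds to the backward limit $r\to 0$ converging to $(\alpha_0,0)$, rather than merely $r$ becoming small. Second, ruling out trajectories that reach $r=0$ along the strip boundary $\phi\to 0$ or $\phi\to\pi/2$, i.e.\ tangent to a coordinate axis; these correspond to the axis cusps (Types B--D) and not to a cusp at the origin, and the blow-up of $1/\sin\phi$ or $1/\cos\phi$ there prevents such trajectories from producing a Type E solution. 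Taken together, these steps yield that the cone $C$ is the unique minimal $O(p)\times O(q)$-invariant hypersurface with a singularity at the origin.
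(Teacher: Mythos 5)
Your computational half is fine and matches the paper: substituting the ray $y=mx$ into \eqref{eq:mean-H0} with $H=0$ kills the $\lambda_1$ term and gives $(q-1)m^2=p-1$, and the curvature formulas \eqref{eq:principal_curvatures} yield the stated $\lambda_i$. Your reduction of the uniqueness question to a planar autonomous system is also sound in principle, and it is in fact the same blow-up the paper uses: your $(\phi,\psi)$-field is exactly the paper's $\tilde Y_0$ of \eqref{tilde-Y0} (which governs the invariant divisor $\{r=0\}$, and which for $H=0$ coincides with the full $(\alpha,\theta)$-dynamics of \eqref{eq:blowingupHzero}, since these decouple from $r$), up to the coordinate change $\psi=\theta-\alpha$ and division by the positive factor $\sin\phi\cos\phi$. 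This is the Bombieri--De Giorgi--Giusti-type reduction that the paper's Remark notes is available precisely when $H=0$. However, your uniqueness argument has two genuine gaps. First, $(\alpha_0,0)$ is \emph{not} the unique interior equilibrium: $(\alpha_0,\pi)$, i.e.\ the cone ray traversed toward the origin, is also an equilibrium, and it is a source (trace $p+q-1>0$, determinant $2(p+q-2)>0$). Consequently your inference ``$r\to0$ forces $\tau\to-\infty$'' is circular — it uses $r\sim e^{\tau}$, which holds only near the sink; near the source $r\sim e^{-\tau}$ and $r\to 0$ as $\tau\to+\infty$. The correct statement is that $|d(\ln r)/d\tau|\le 1$ rules out reaching $r=0$ in finite $\tau$, and then \emph{both} time directions must be treated: the branch leaving the origin limits backward to the sink (your ``a sink has no unstable manifold'' argument), while the branch entering the origin limits forward to the source, where the symmetric statement ``a source has no stable manifold'' is needed. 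These two equilibrium trajectories together give the two branches of the cone.

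Second, and more seriously, you assert but do not prove that a trajectory with $r\to0$ must have an interior equilibrium as its $\alpha$- (or $\omega$-) limit set. A priori the limit set could be a periodic orbit, a graphic, or could lie on the strip boundary $\phi\in\{0,\pi/2\}$, where your vector field is singular; your dismissal of the boundary case (``the blow-up of $1/\sin\phi$ or $1/\cos\phi$ prevents such trajectories'') is not an argument — indeed trajectories \emph{do} limit onto the boundary, namely those producing the Type B and C axis cusps. The paper closes exactly this gap: it regularizes by multiplying by $xy$ (the field $Y=xyX$), so the boundary becomes invariant and carries the hyperbolic saddles $p_1,\dots,p_4$ of Proposition \ref{prop:6singular_points}; the global phase portrait there excludes periodic orbits and graphics; and Proposition \ref{prop:sing_normally} (normal hyperbolicity of the lines $\ell_i$, with stable/unstable fibers tangent to the levels of $r$) shows that any orbit asymptotic to a boundary saddle has $r\to r^*>0$, i.e.\ is an axis cusp away from the origin. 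In your 2D setting the analogous missing step is: convergence to a hyperbolic boundary saddle is exponential, hence $\int\cos\psi\,d\tau$ converges and $r\to r^*>0$, never $0$. Without this, and without excluding periodic orbits (e.g.\ via the phase portrait of Proposition \ref{prop:6singular_points}), the claim that the cone is the \emph{only} solution with a cusp at the origin is not established.
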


\begin{remark}
    In the case $p=q,$ Theorem \ref{thm:main-2} was proved by the first author in Theorem 4.1 of \cite{alencar}.
\end{remark}

\begin{remark} In the proof of Theorem \ref{thm:main-1} we use blowing up techniques for degenerate singularities and invariant manifold theory for a tridimensional system of ordinary differential equations. We remark that the noninvariance of the constant mean curvature equation by homotheties prevents us from using the method developed by E. Bombieri, E. De Giorgi, and E. Giusti in \cite{bgg} to classify minimal hypersurfaces in the Euclidean space, and transform the constant mean curvature equation in a bidimensional system of ordinary differential equations. This forces us to analyze a tridimensional system of ordinary differential equations with lines of singularities (normallly hyperbolic) and degenerated singularities.
\end{remark}

 \section{Proof of Theorem \ref{thm:main-1}}

Let $G=O(p)\times O(q)$ acting in the standard way on $\mathbb{R}^{p}\times\mathbb{R}^{q}=\mathbb{R}^{p+q}.$ The orbit of this action which passes through the point $(U,V)\in\mathbb{R}^p\times \mathbb{R}^q$ is given by $\mathbb{S}^{p-1}(|U|)\times\mathbb{S}^{q-1}(|V|),$ and the orbit space $\mathbb{R}^{p+q}/G$
can be represented by 
\begin{equation}
Q:=\pi(\mathbb{R}^{p+q})=\{(x,y)\in\mathbb{R}^2; x\geq 0,\ y\geq 0\},
\end{equation}
where $\pi(U,V)=(|U|,|V|).$

If $\varphi:\Sigma\to\mathbb{R}^{p+q}$ is a hypersurface invariant by the group of rotation  $O(p)\times O(q),$ then $\varphi(\Sigma)$ has the principal curvatures, 
\begin{equation}\label{eq:principal_curvatures}
\lambda_1(t)=\frac{x'(t)y''(t)-x''(t)y'(t)}{(W(t))^3},\; \lambda_2(t)=\frac{y'(t)}{x(t)W(t)}, \; \lambda_3(t)=-\frac{x'(t)}{y(t)W(t)},
\end{equation}
where $W(t)=\sqrt{(x'(t))^2+(y'(t))^2},$ $\lambda_1$ has multiplicity one, $\lambda_2$ has multiplicity $q-1,$ and $\lambda_3$ has multiplicity $p-1.$ Thus, the normalized mean curvature $H$ is given by
\begin{equation}\label{eq:mean-H}
\aligned
(p+q-1)H &=  \lambda_1+(q-1)\lambda_2+(p-1)\lambda_3\\
&= \frac{x'(t)y''(t)-x''(t)y'(t)}{(W(t))^3}+(q-1)\frac{y'(t)}{x(t)W(t)}\\
&\quad- (p-1)\frac{x'(t)}{y(t)W(t)}.
\endaligned
\end{equation}
This differential equation is homogeneous and so the solutions are  invariant by reparametrizations.
If we assume that the profile curve is parametrized by the arc length, i.e., $W(t)=1$, then there exists an angle function $\theta(t)$ such that 
\begin{equation}\label{angle-function}
x'(t)=\cos(\theta(t)) \quad \mbox{and} \quad y'(t)=\sin(\theta(t)).
\end{equation}
Replacing \eqref{angle-function} in \eqref{eq:mean-H}, we obtain 
\begin{equation}\label{eq:H-theta}
\theta'(t)=(p+q-1)H+(p-1)\frac{\cos(\theta(t))}{y(t)}-(q-1)\frac{\sin(\theta(t))}{x(t)}.
\end{equation}
Thus, combining equations \eqref{angle-function} and \eqref{eq:H-theta}, we conclude that the profile curves $\gamma(t)=(x(t),y(t))$ generating constant mean curvature $H$ hypersurfaces of $\mathbb{R}^{p+q},$ invariant by the action of the group $O(p)\times O(q),$ are the projection in the $xy$-plane of the solutions of the system of first order ordinary differential equations
\begin{equation}\label{eq:H}
\left\{
\aligned
x'(t)&= \cos (\theta(t))\\
y'(t)&=\sin(\theta(t))\\
    \theta'(t)& =(p+q-1)H+(p-1)\frac{\cos(\theta(t))}{y(t)}-(q-1)\frac{\sin(\theta(t))}{x(t)},
    \endaligned\right.
\end{equation}
where $(x,y)\in Q_1=\{(x,y); x> 0\; \mbox{and} \;y > 0\}$ and $\theta\in [0,2\pi)$.

The solutions of the system of differential equations \eqref{eq:H} are the integral curves of the vector field
   \begin{equation} \label{eq:fieldX} 
   X(x,y,\theta)=\left(\cos\theta,\sin\theta, (p+q-1)H+(p-1)\frac{\cos\theta}{y}-(q-1)\frac{\sin\theta}{x}\right).
   \end{equation}
   
A regular extension of $X$ is given by the vector field $Y=xy X,$ $x>0,y>0$. The orbits of $Y$ are the same of $X$ in the region $Q_1\times\mathbb{R}$.
%It can be assured by the following well known result.

%\begin{lemma}\label{lem:parametrizacao}
%Consider the smooth vector fields $X$ and $Y=\rho X$, with $\rho$ a positive function. Then the orbits of $X$ and $Y$ are the same.
%\end{lemma}

%\begin{proof} Let $\gamma(s)$ be an integral curve of $X$ and consider the reparametrization $\Gamma(t)=\gamma(s(t))$, with $dt/ds=1/\rho(\gamma(s))$. Then $\Gamma(t)$ is an integral curve of $Y$.
%In fact,
%\[\Gamma'(t)=\gamma'(s(t))\frac{ds}{dt}= X(\gamma(s(t)) \rho(\gamma(s(t)))=\rho(\Gamma(t)) X(\Gamma(t))= Y(\Gamma(t)).\]
 % This ends the proof.  
%\end{proof}

We need to analyze the behavior of $Y$ in the neighborhood of the axis $\theta$ with $x\geq 0$ and $y\geq 0$. First we observe that
$Y(0,0,\theta)=(0,0,0)$ and the three eigenvalues of $DY(0,0,\theta)$ are zeros, i.e., a very degenerated  set of singularities.  

In order to analyze this line of singular points it will considered a cylindrical blowing-up.

It consists of replacing the  vector field $Y$   by a vector field $\tilde Y$ defined in $\mathbb{S}^1\times \mathbb{R}$ which has  has less degenerate singularities. For details about the technique of blowing up see, for example, \cite{dumortier}   and \cite{takens}.

\begin{proposition}\label{prop:Y-tilde}
Consider the cylindrical blowing up  along the axis $\theta$ 
\[(x,y,\theta)=(r\cos\alpha,  r\sin\alpha,  \theta)=R(r,\alpha,\theta),\] with $0\leq \alpha\leq \pi/2$. Taking the pullback of $ Y$ by $R $ and dividing by $r$, we obtain the vector field
\[
\Tilde{Y}:=\frac{1}{r}R_{*}Y=:(Y_r,Y_\alpha,Y_\theta),
\]
where
\begin{equation}\label{eq:tilde-Y}
\left\{    \begin{aligned}
Y_r=r'&=r\sin\alpha\cos\alpha\cos(\theta-\alpha)\\
Y_\alpha=\alpha'&=\sin\alpha\cos\alpha\sin(\theta-\alpha)\\
Y_\theta=\theta'&=\frac{r}{2}(p+q-1)H\sin 2\alpha + (p-1)\cos\alpha\cos\theta\\
&\quad- (q-1)\sin\alpha\sin\theta.\\
\end{aligned}\right.
\end{equation}
\end{proposition}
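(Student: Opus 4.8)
The plan is to carry out the blow-up as an explicit change of coordinates on the integral curves of $Y$ and to verify that, after dividing by $r$, the transformed field has the three components claimed. First I would write $Y = xyX$ in full Cartesian form, namely
\[
Y(x,y,\theta) = \bigl(xy\cos\theta,\; xy\sin\theta,\; xy(p+q-1)H + (p-1)x\cos\theta - (q-1)y\sin\theta\bigr),
\]
reading the first two entries directly from $xyX$ and clearing the denominators $1/y$ and $1/x$ in the third entry against the factor $xy$.

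Next I would express $Y$ in the coordinates $(r,\alpha,\theta)$ via $x = r\cos\alpha$, $y = r\sin\alpha$, that is, compute $R_*Y$. Differentiating these relations along a trajectory gives
\[
\dot x = \dot r\cos\alpha - r\dot\alpha\sin\alpha, \qquad \dot y = \dot r\sin\alpha + r\dot\alpha\cos\alpha,
\]
a rotation acting on $(\dot r,\, r\dot\alpha)$. Inverting it (multiply the two equations by $\cos\alpha,\sin\alpha$ and add, respectively by $-\sin\alpha,\cos\alpha$ and add) yields
\[
\dot r = \dot x\cos\alpha + \dot y\sin\alpha, \qquad r\dot\alpha = -\dot x\sin\alpha + \dot y\cos\alpha,
\]
with $\dot\theta$ unchanged. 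Substituting the components of $Y$ and using the angle-subtraction identities $\cos\theta\cos\alpha + \sin\theta\sin\alpha = \cos(\theta-\alpha)$ and $\sin\theta\cos\alpha - \cos\theta\sin\alpha = \sin(\theta-\alpha)$, together with $2\sin\alpha\cos\alpha = \sin 2\alpha$, collapses the three expressions into the forms stated, up to an overall power of $r$.

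The decisive step is the division by $r$. I would observe that every component of $R_*Y$ carries a factor of $r$: the first two entries of $Y$ contain $xy = r^2\sin\alpha\cos\alpha$, so $\dot r$ is of order $r^2$ and the combination $r\dot\alpha$ is of order $r^2$, while $\dot\theta$ is of order $r$ because its lowest-order terms $(p-1)x\cos\theta$ and $-(q-1)y\sin\theta$ are linear in $r$. Hence $\tilde Y := r^{-1}R_*Y$ is well defined and extends smoothly across $\{r=0\}$, and reading off the three quotients gives precisely $(Y_r,Y_\alpha,Y_\theta)$ in \eqref{eq:tilde-Y}. This uniform divisibility by $r$ is really the content of the proposition: it is why one first multiplies $X$ by $xy$ and only then divides by $r$, the net effect being a field on the cylinder $\mathbb{S}^1\times\mathbb{R}$ whose singularities along $\{r=0\}$ are less degenerate than the original triple-zero line. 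I do not expect a genuine obstacle here; the only points demanding care are the bookkeeping of the $1/r$ factor in the $\alpha$-equation and the confirmation of uniform divisibility by $r$, since a miscount there would either destroy the smooth extension to $r=0$ or corrupt the weights on which the subsequent singularity analysis depends.
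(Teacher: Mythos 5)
Your proposal is correct and follows essentially the same route as the paper: express $Y=xyX$ in the blow-up coordinates via $\dot r = \dot x\cos\alpha + \dot y\sin\alpha$, $r\dot\alpha = -\dot x\sin\alpha + \dot y\cos\alpha$, substitute, and divide by $r$ (the paper merely inserts an extra, inessential intermediate display of the blown-up $X$ before doing the same for $Y$). Your explicit remark that every component of $R_*Y$ carries a uniform factor of $r$, so that $\tilde Y$ extends smoothly across $\{r=0\}$, is a correct and slightly more careful articulation of the point the paper handles implicitly with the phrase ``since $r>0$.''
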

\begin{proof}

Taking the derivatives in $x=r\cos\alpha$ and $y=r\sin\alpha$ relative to paramater $t,$ we obtain
\[
\left\{\begin{aligned}
x'&=r'\cos\alpha - r\alpha'\sin\alpha\\
y'&=r'\sin\alpha + r\alpha'\cos\alpha,\\
\end{aligned}\right.
\]
which gives,
\[
\left\{\begin{aligned}
\alpha'&=\frac{1}{r}(y'\cos\alpha - x'\sin\alpha)\\
r'&=x'\cos\alpha + y'\sin\alpha.\\
\end{aligned}\right.
\]
Replacing the expressions $x'=\cos\theta$ and $y'=\sin\theta$ of the vector field $X$ on the expressions above, we obtain
\[
\left\{\begin{aligned}
\alpha'&=\frac{1}{r}\sin(\theta-\alpha)\\
r'&=\cos(\theta-\alpha)\\
\theta'&=(p+q-1)H + (p-1)\frac{\cos\theta}{r\sin\alpha}-(q-1)\frac{\sin\theta}{r\cos\alpha}.
\end{aligned}\right.
\]
On the other hand, replacing the expressions 
\[
\left\{\begin{aligned}
x'&=xy\cos\theta=r^2\cos\alpha\sin\alpha\cos\theta,\\
y'&=xy\sin\theta=r^2\cos\alpha\sin\alpha\sin\theta,
\end{aligned}\right.
\] 
of the vector field $Y$, we obtain
\begin{equation}\label{tilde-Y-0}
    \left\{\begin{aligned}
r'&=r^2\sin\alpha\cos\alpha\cos(\theta-\alpha)\\
\alpha'&=r\sin\alpha\cos\alpha\sin(\theta-\alpha)\\
\theta'&=(p+q-1)Hr^2\sin\alpha\cos\alpha + (p-1)r\cos\alpha\cos\theta\\
&\quad - (q-1)r\sin\alpha\sin\theta.\\
\end{aligned}\right.
\end{equation}
Since $r>0$, %by using Lemma \ref{lem:parametrizacao}, 
the vector field $(1/r)Y$ has the same orbits as $Y.$ Thus, by taking the pullback of $ Y$ by $R $ and dividing by $r$, we obtain the vector field
\[
\Tilde{Y}:=\frac{1}{r}R_{*}Y=:(Y_r,Y_\alpha,Y_\theta),
\]
as stated in equation \eqref{eq:tilde-Y}.
\end{proof}

The divisor of $R,$ i.e., the singular set of $R,$ is given by the set where $r=0$. From the  equation \eqref{eq:tilde-Y}, this set is invariant by the flow of  $\tilde Y$. Thus, we can define the bimensional vector field $\tilde Y_0$ by 
\[
\tilde Y_0(\alpha,\theta)=\pi_1(\tilde Y(0,\alpha,\theta)),
\]
where $\pi_1(x,y,z)=(y,z),$ i.e., $\tilde Y_0$ is the vector field associated to the bidimensional system of equations
\begin{equation}\label{tilde-Y0}
    \left\{\begin{aligned}
\alpha'&= \sin\alpha\cos\alpha\sin(\theta-\alpha)\\
\theta'&=  (p-1)\cos\alpha\cos\theta - (q-1)\sin\alpha\sin\theta.\\
\end{aligned}\right.
\end{equation}
The first step to understand the phase portrait of $\tilde Y$  is to analyze the phase portrait of $\tilde Y_0.$

Also we observe that in the coordinates $ ( \alpha,\theta)$ the region $  [0,\pi/2]\times\mathbb{R}$
  is invariant by the flow of $\tilde Y_0$, which is $2\pi-$periodic in $\alpha$ and $\theta$.  
 
\begin{proposition}\label{prop:6singular_points}
   In the rectangle $R=[0,\pi/2]\times[0,2\pi) $  the vector field 
$\tilde Y_0$ defined by equation \eqref{tilde-Y0} has six singular points, all of them hyperbolic (saddles, foci or nodes), namely $p_1=(0,\pi/2)$, $p_2=(0,3\pi/2)$,
   $p_3=(\pi/2,0)$, $p_4=(\pi/2,\pi)$, $p_5=(\alpha_0,\alpha_0)$, $p_6=(\alpha_0,\pi+\alpha_0)$,  where 
    \begin{equation}\label{alpha-0}    
     \alpha_0=\arctan\left( \sqrt{\frac{p-1}{q-1}}\right)=\arccos{\left( \sqrt{\frac{q-1}{p+q-2}}\right)}.
     \end{equation}
    The phase portrait of $\tilde Y_0$ in the region $[0, \pi/2]\times [0,2\pi ]$ is as shown in Figure \ref{fig:phase_X0a}.
\end{proposition}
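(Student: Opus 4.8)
The plan is to treat \eqref{tilde-Y0} as a planar system $(\alpha',\theta')=(F,G)$ on the compact cylinder $[0,\pi/2]\times(\mathbb{R}/2\pi\mathbb{Z})$, with $F=\sin\alpha\cos\alpha\sin(\theta-\alpha)$ and $G=(p-1)\cos\alpha\cos\theta-(q-1)\sin\alpha\sin\theta$, and to determine its phase portrait in three stages: locate the singularities, linearize to read off their hyperbolic type, and then assemble the global picture from the invariant boundary together with the nullclines.

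First I would solve $F=G=0$. Since $F=\sin\alpha\cos\alpha\sin(\theta-\alpha)$, the equation $F=0$ splits into the three cases $\alpha=0$, $\alpha=\pi/2$, and $\theta\equiv\alpha$ or $\theta\equiv\alpha+\pi \ (\mathrm{mod}\ 2\pi)$. Substituting $\alpha=0$ into $G=(p-1)\cos\theta$ forces $\cos\theta=0$, producing $p_1,p_2$; substituting $\alpha=\pi/2$ into $G=-(q-1)\sin\theta$ forces $\sin\theta=0$, producing $p_3,p_4$; and substituting $\theta=\alpha$ (resp. $\theta=\alpha+\pi$) into $G=0$ gives $(p-1)\cos^2\alpha=(q-1)\sin^2\alpha$, i.e. $\tan^2\alpha=\frac{p-1}{q-1}$, which selects $\alpha=\alpha_0$ and yields $p_5,p_6$. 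As $\theta=\alpha_0\in(0,\pi/2)$ and $\theta=\alpha_0+\pi\in(\pi,\tfrac{3\pi}{2})$ both lie in $[0,2\pi)$, this accounts for exactly the six points listed, with $\alpha_0$ as in \eqref{alpha-0}.

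Next I would evaluate the Jacobian of \eqref{tilde-Y0} at each singularity. At the four boundary points the matrix is triangular: at $p_1=(0,\pi/2)$ it is $\left(\begin{smallmatrix}1&0\\-(q-1)&-(p-1)\end{smallmatrix}\right)$, with eigenvalues $1$ and $-(p-1)$ of opposite sign, so $p_1$ is a saddle; the analogous computations at $p_2,p_3,p_4$ give eigenvalue pairs $\{-1,\,p-1\}$, $\{1,\,-(q-1)\}$, $\{-1,\,q-1\}$, so all four corners are saddles. At the interior points the Jacobian is a scalar multiple $c\,M$ of the fixed matrix $M=\left(\begin{smallmatrix}-1&1\\-(p+q-2)&-(p+q-2)\end{smallmatrix}\right)$, with $c=\sin\alpha_0\cos\alpha_0>0$ at $p_5$ and $c=-\sin\alpha_0\cos\alpha_0<0$ at $p_6$. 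Since $\det M=2(p+q-2)>0$ and $\mathrm{tr}\,M=-(p+q-1)<0$, the point $p_5$ is an attracting node or focus and $p_6$ a repelling node or focus (the discriminant $(p+q-1)^2-8(p+q-2)$ changes sign, so both types genuinely occur). In every case the eigenvalues have nonzero real part, which establishes hyperbolicity.

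Finally, to reconstruct the global portrait I would exploit that the edges $\alpha=0$ and $\alpha=\pi/2$ are invariant (there $\alpha'\equiv0$), so the flow on each reduces to the one-dimensional equations $\theta'=(p-1)\cos\theta$ and $\theta'=-(q-1)\sin\theta$, whose orientations connect the corner saddles along the boundary. In the interior I would determine the signs of $F$ and $G$ on the regions cut out by the nullclines $\{\theta=\alpha\}$, $\{\theta=\alpha+\pi\}$ and $\{(p-1)\cos\alpha\cos\theta=(q-1)\sin\alpha\sin\theta\}$, thereby tracking the stable and unstable separatrices of the saddles and the basins of the sink $p_5$ and source $p_6$. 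The delicate point—and the step I expect to be the main obstacle—is excluding recurrence: I must rule out periodic orbits (both contractible and those winding around the cylinder) and saddle graphics, so that by Poincaré–Bendixson every limit set is one of the six singularities. A single global Dulac function seems unavailable: multiplying $(F,G)$ by $B=1/(\sin\alpha\cos\alpha)$ makes the divergence $-\cos\theta\cos\alpha-\sin\theta\sin\alpha-(p-1)\frac{\sin\theta}{\sin\alpha}-(q-1)\frac{\cos\theta}{\cos\alpha}$, which is strictly negative only in part of the strip. I would therefore combine this sign information region by region with a transversality/index argument, after which the separatrix connections are forced and the phase portrait coincides with Figure \ref{fig:phase_X0a}.
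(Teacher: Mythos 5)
Your proposal follows essentially the same route as the paper's proof: the same case analysis of $\sin\alpha\cos\alpha\sin(\theta-\alpha)=0$ to locate the six singular points, the same Jacobian computations (including the matrix $\lambda_0\left(\begin{smallmatrix}-1&1\\-(p+q-2)&-(p+q-2)\end{smallmatrix}\right)$ at $p_5$, $p_6$ with the discriminant $(p+q-1)^2-8(p+q-2)$ deciding focus versus node), and the same global assembly from the invariant edges $\alpha=0$, $\alpha=\pi/2$ and the transversal segment $\theta=\pi$. The recurrence-exclusion step you single out as the main obstacle is in fact treated no more rigorously in the paper, where the global portrait is settled by a ``graphic analysis'' of the separatrices, so your attempt matches (and in that respect is more candid than) the published argument.
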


  %  \begin{figure}[H]
	%\begin{center}
%
 %	 	\includegraphics[trim=10 30 30 0,clip,width=0.9 \textwidth]{esbocoX0_tilde.png}
 %	 	\vskip .5cm
	%	\caption{Integral curves of $\tilde Y_0$ in the region
%  $[0,\pi/2]\times [0,2\pi]$. Top: The singular point $p_5$ (resp. $p_6$) is an attractor (resp. repeller).  Here $p+q\geq 8$. Both are nodes.
  %
 % Bottom: The singular point $p_5$ (resp. $p_6$) is an attractor (resp. repeller). Here $p+q<8$.   Both are foci.   }
		%\label{fig:phase_X0a}
	%\end{center}
%\end{figure} 

    \begin{figure}[ht]
	\begin{center}
 \def\svgwidth{0.8\textwidth} 
	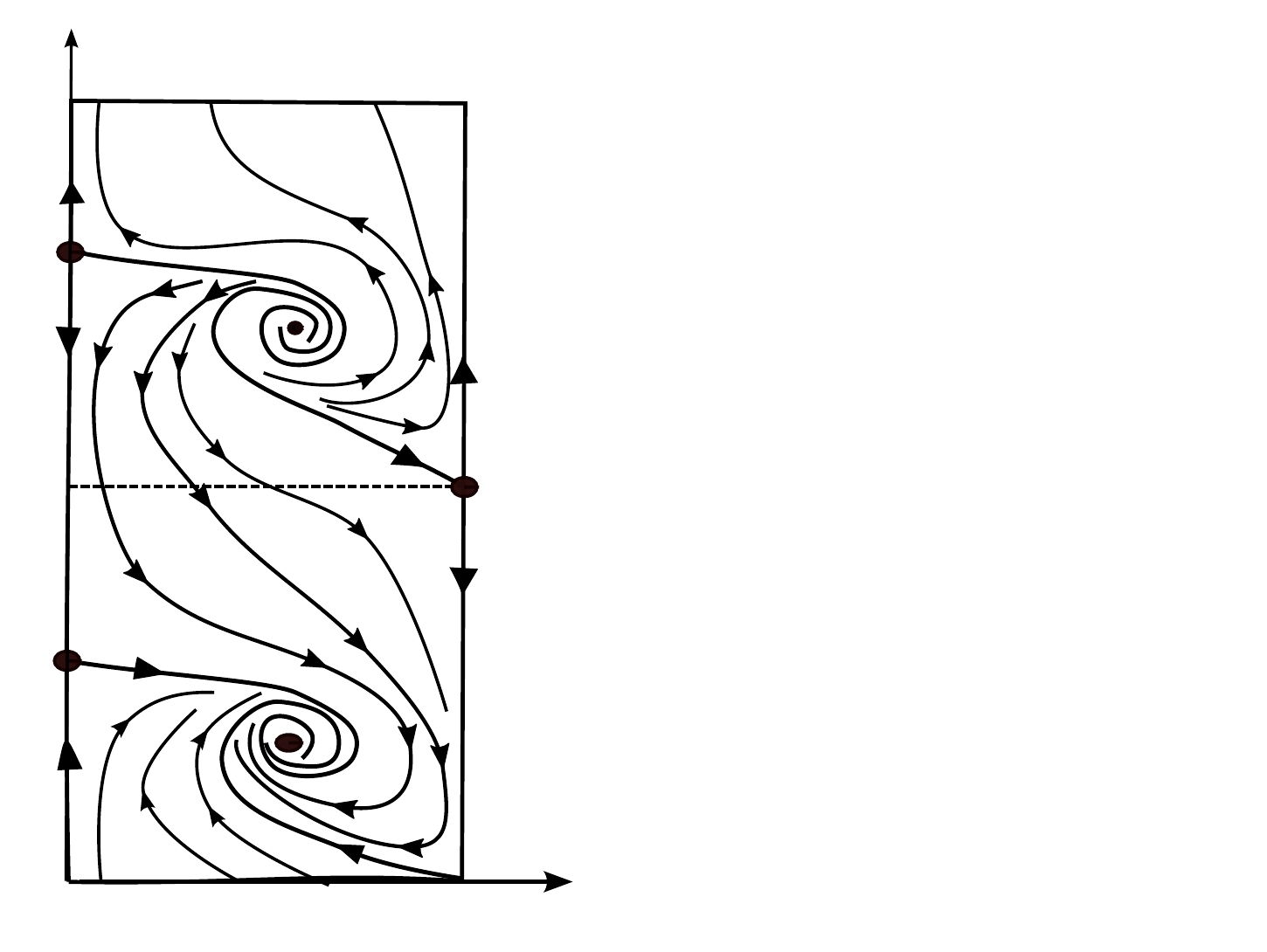
 	 	 
		 \caption{Integral curves of $\tilde Y_0$ in the region
  $[0,\pi/2]\times [0,2\pi]$.
  Left: The singular point $p_5$ (resp. $p_6$) is an attractor (resp. repeller). Here $p+q<8$.   Both are foci. 
  Right: The singular point $p_5$ (resp. $p_6$) is an attractor (resp. repeller).  Here $p+q\geq 8$. Both are nodes.  }
		 	\label{fig:phase_X0a}
	\end{center}
\end{figure} 
 \begin{proof}

By the first coordinate of $\tilde Y_0,$ the singularities will occur in the points $(\alpha,\theta)\in[0,\pi/2]\times[0,2\pi)$ such that $\sin\alpha=0,$ $\cos\alpha=0$ or $\sin(\theta-\alpha)=0.$ This gives, respectively, $\alpha=0,$ $\alpha=\pi/2$ and $\theta=\alpha+k\pi,$ $k=0,1.$ 

When $\alpha=0$,  $\tilde Y_0(0,\theta)=(0,(p-1)\cos\theta)$. Therefore $p_1=(0,\pi/2)$ and $p_2=(0,3\pi/2)$ are singular points. For $\alpha=\pi/2$, $\tilde Y_0(\pi/2,\theta)=(0,(q-1)\sin\theta)$. Therefore $p_3=(\pi/2,0)$ and $p_4=(\pi/2,\pi)$ are singular points. Finally, for $\alpha\in (0,\pi/2)$ the singular points of $\tilde Y_0$ are defined by
 $\theta=\alpha$ or $\theta-\alpha=\pi$ and $(p-1)\cos^2\alpha=(q-1)\sin^2\alpha$. It follows that
 \[\tan\alpha=\pm \sqrt{\frac{p-1}{q-1}}.\]
 This gives $p_5=(\alpha_0,\alpha_0)$ and $p_6=(\alpha_0,\alpha_0+\pi)$ in the rectangle $[0,\pi/2]\times[0,2\pi),$ where $\alpha_0$ is given by Equation \eqref{alpha-0}.

 We claim that all singular points are hyperbolic. This follows analyzing    $D\tilde Y_0 (p_i),$ $i=1,\ldots,6$. In fact, since
\[
\frac{\partial \tilde Y_0}{\partial \alpha}(\alpha,\theta)=\left[
\begin{array}{c}
     \cos(2\alpha)\sin(\theta-\alpha)-\sin\alpha\cos\alpha\cos(\theta-\alpha) \\
     -(p-1)\sin\alpha\cos\theta - (q-1)\cos\alpha\sin\theta\\ 
\end{array}
\right]
\]
and
\[
\frac{\partial \tilde Y_0}{\partial \theta}(\alpha,\theta)=\left[
\begin{array}{c}
     \sin\alpha\cos\alpha\cos(\theta-\alpha)\\
     -(p-1)\cos\alpha\sin\theta-(q-1)\sin\alpha\cos\theta\\ 
\end{array}
\right],
\]
we have
\[
D\tilde Y_0(p_1)=-D\tilde Y_0(p_2)=\left[
\begin{array}{cc}
   1  & 0 \\
   -(q-1)  & -(p-1)\\
\end{array}
\right]
\]
and
\[
D\tilde Y_0(p_3)=-D\tilde Y_0(p_4)=\left[
\begin{array}{cc}
   1  & 0 \\
   -(p-1)  & -(q-1)\\
\end{array}
\right].
\]
This gives clearly the eigenvalues and proves that $p_1,$ $p_2,$ $p_3,$ and $p_4$ are hyperbolic of saddle type. On the other hand, denoting by 
\[
\lambda_0=\sin\alpha_0\cos\alpha_0=\frac{\sqrt{(p-1)(q-1)}}{p+q-2}>0,
\]
we have
\[
D\tilde Y_0(p_5)=-D\tilde Y_0(p_6)=\lambda_0\left[
\begin{array}{cc}
   -1  & 1 \\
   -(p+q-2)  & -(p+q-2)\\
\end{array}
\right].
\]
This gives the eigenvalues 
\[
\mu_a=\lambda_0\frac{-(p+q-1)+\sqrt{(p+q-1)^2-8(p+q-2)}}{2}
\]
and 
\[
\mu_b=\lambda_0\frac{-(p+q-1)-\sqrt{(p+q-1)^2-8(p+q-2)}}{2}
\]
of $D\tilde Y_0(p_5),$ and the eigenvalues 
\[
\nu_a=-\mu_a\quad \mbox{and} \quad\nu_b=-\mu_b
\] 
of $D\tilde Y_0(p_6).$ Clearly, $\mu_a$ and $\mu_b$ have negative real part and thus $p_5$ is an attractor. Analogously, $p_6$ is a repeller. Moreover, by the sign of the discriminant 
\[
D=(p+q-1)^2-8(p+q-2)=(p+q)^2-10(p+q)+17,
\] 
we have that both are foci for $p+q<5+2\sqrt{2}<8$ (i.e., complex eigenvalues) and both are nodes for $p+q\geq 8$ (i.e., real eigenvalues). 

In order do complete the phase portrait, notice also that the lines $\alpha=0$ and $\alpha=\pi/2$ are both invariant by $\tilde Y_0.$

Using the description of the local phase portrait near the hyperbolic singular points we can globalize the analysis observing the behavior of the invariant separatrices (stables and unstables). A graphic analysis shows that, in the rectangle $R=[0,\pi/2]\times[0,2\pi),$ we have
\begin{itemize}
\item[(i)] $W^s(0,\pi/2)\cap R$ and $W^u(0,3\pi/2)\cap R$ are contained in $\alpha=0;$
\item[(ii)] $W^u(\pi/2,\pi)\cap R $ and $W^u(0,3\pi/2)\cap R$ are contained in $\alpha=\pi/2;$
\item[(iii)] the $\omega- $limit  set of $W^u(\pi/2,0)\cap R$ and $W^u(0,\pi/2)\cap R$ is the attractor $\{p_5\};$
\item[(iv)]  the $\alpha$-limit set of $W^u(\pi/2,\pi)\cap R$ and $W^u(0,3\pi/2)\cap R$ is the repeller $\{p_6\};$
\item[(v)]  the vector field $\tilde Y_0$ is transversal to the line $\theta=\pi$ and for any point $p$ in the open interval $(\alpha,\pi)$, $0<\alpha<\pi$,  we have that $\omega(p)=\{p_5\}$ and $\alpha(p)=\{p_6\}$.
\end{itemize}

Gluing the local phase portraits near the hyperbolic singularities and taking into account  the properties i), ii), iii), iv) and v)  listed above we obtain the result stated and illustrated in Figure \ref{fig:phase_X0a}.
 \end{proof}

\begin{proposition}\label{prop:WuP5}
      The point  $P_5=(0, \alpha_0,\alpha_0)$ is a singular  hyperbolic saddle of the vector field $\tilde Y$ given by equation \eqref{eq:tilde-Y}. The stable manifold  $W^s(P_5)$ is contained in the plane $r=0,$ the unstable manifold $W^u(P_5)$ is one dimensional, and it is locally parametrized by
\[
\left\{
\begin{aligned}
     \theta(r)=& \alpha_0 + k\pi +k_1r+k_2r^2+k_3r^3+O(r^4)\\
     \alpha(r)=& \alpha_0 +l_1 r+l_2r^2+l_3 r^3+O(r^4),
 \end{aligned}\right.
\]
 where
  \begin{equation}\label{eq:WuP5}
     \aligned
     l_1=&\frac{ H(p + q - 1)}{3(p + q) - 4},\;\; \;\;k_1=2l_1,\;\; k_2=3l_2,\\
     l_2=& -\frac{H^2 (p + q - 2) (p - q) (p + q - 1)^2}{ 2 (2 p + 2 q - 1) (3 p + 3 q - 4)^2\sqrt{(p - 1)(q - 1)} }.
     \endaligned
 \end{equation}
      
\end{proposition}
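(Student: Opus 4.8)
The plan is to linearize $\tilde Y$ at $P_5$, read off the hyperbolic type from the block structure of the Jacobian, and then produce $W^u(P_5)$ as a graph over the variable $r$ by inserting a power series into the invariance equations of the flow. First I would compute $D\tilde Y(P_5)$. Since $Y_r=r\sin\alpha\cos\alpha\cos(\theta-\alpha)$ carries an explicit factor $r$, its $\alpha$- and $\theta$-derivatives vanish on $\{r=0\}$, while $\partial_r Y_r(P_5)=\sin\alpha_0\cos\alpha_0\cos 0=\lambda_0>0$; and since $Y_\alpha$ is independent of $r$, the $(\alpha,r)$-entry also vanishes. Hence $D\tilde Y(P_5)$ is block lower-triangular, with first row $(\lambda_0,0,0)$ and lower-right $2\times2$ block equal to $D\tilde Y_0(p_5)$ from Proposition \ref{prop:6singular_points}. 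Expanding the characteristic polynomial along the first row gives the eigenvalues $\lambda_0>0$ together with $\mu_a,\mu_b$, both of negative real part, so $P_5$ is a hyperbolic saddle with one positive and two (real-part) negative eigenvalues.

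Next I would extract the invariant-manifold structure. Because $\mu_a,\mu_b\neq\lambda_0$, any stable eigenvector $v$ satisfies $(\lambda_0-\mu)v_r=0$, forcing $v_r=0$; thus the stable eigenspace lies in the invariant plane $\{r=0\}$. By the Stable Manifold Theorem $W^s(P_5)$ is tangent to this plane, and since $\{r=0\}$ is invariant and $p_5$ is an attractor of $\tilde Y_0$ there, uniqueness of the stable manifold forces $W^s(P_5)\subset\{r=0\}$, two-dimensional. The remaining eigenvalue $\lambda_0$ has an eigenvector with nonzero $r$-component, so $W^u(P_5)$ is one-dimensional, transverse to $\{r=0\}$, and $r$ is a legitimate local parameter on it.

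I would then compute the coefficients. Along $W^u$ the chain rule gives $d\alpha/dr=Y_\alpha/Y_r$ and $d\theta/dr=Y_\theta/Y_r$. Cancelling the common factor $\sin\alpha\cos\alpha$ in the first equation produces the clean relation $r\,\alpha'(r)=\tan(\theta-\alpha)$; substituting the series and matching the coefficients of $r$ and $r^2$ immediately yields $k_1=2l_1$ and $k_2=3l_2$. The second equation, after multiplying by $Y_r$, reads
\[
r\sin\alpha\cos\alpha\cos(\theta-\alpha)\,\theta'(r)=r(p+q-1)H\sin\alpha\cos\alpha+G(\alpha,\theta),
\]
where $G(\alpha,\theta)=(p-1)\cos\alpha\cos\theta-(q-1)\sin\alpha\sin\theta$ vanishes at $(\alpha_0,\alpha_0)$ by \eqref{alpha-0}. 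Both first partials of $G$ at $(\alpha_0,\alpha_0)$ equal $-(p+q-2)\lambda_0$, so matching the coefficient of $r$ gives $\lambda_0 k_1=(p+q-1)H\lambda_0-3(p+q-2)\lambda_0 l_1$, which together with $k_1=2l_1$ solves to $l_1=H(p+q-1)/\bigl(3(p+q)-4\bigr)$, as claimed in \eqref{eq:WuP5}.

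The value of $l_2$, and with it $k_2=3l_2$ (and $k_3,l_3$ if needed), comes from carrying both expansions one order further. I expect this to be the main obstacle: it requires the second-order Taylor data of $G$ and of the factor $\sin\alpha\cos\alpha\cos(\theta-\alpha)$, and the resulting linear equation for $l_2$ carries exactly the $(p-q)$ numerator and the denominator $(2p+2q-1)(3p+3q-4)^2\sqrt{(p-1)(q-1)}$ appearing in \eqref{eq:WuP5}. The computation is routine but heavy; it is best organized by recording all needed partials of $G$ at $(\alpha_0,\alpha_0)$ in advance and using the already-established relations $k_1=2l_1$, $k_2=3l_2$ to eliminate the $\theta$-coefficients in favor of the $\alpha$-coefficients before solving order by order.
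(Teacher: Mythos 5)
Your proposal is correct and takes essentially the same route as the paper: linearization at $P_5$ (your block lower-triangular argument produces exactly the eigenvalues $\lambda_0,\mu_a,\mu_b$ the paper lists), invariant manifold theory to place $W^s(P_5)$ in $\{r=0\}$ and make $W^u(P_5)$ a one-dimensional graph over $r$, and order-by-order matching of the power series against the orbit equations $Y_r\,d\alpha=Y_\alpha\,dr$, $Y_r\,d\theta=Y_\theta\,dr$, which is precisely the paper's formal computation with the $1$-forms $\omega_1,\omega_2$ and yields the identical linear system for $k_1,l_1$. Your deferral of the $l_2$ coefficient to a ``routine but heavy'' second-order expansion also mirrors the paper, which likewise only asserts that developing $\omega_1,\omega_2$ to order $r^2$ gives $l_2$ and $k_2=3l_2$.
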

 \begin{proof}

The eigenvalues of $D\tilde Y(P_5)$ are

\begin{align*}
\mu_1=& \frac{\sqrt{(q - 1) (p - 1)} }{ p + q - 2} >0,\\
   \mu_2=&\frac{\sqrt{(q - 1) (p - 1)}(\sqrt{ D}-p-q+1)}{ 2(p + q - 2)}, \\
 \mu_3=&-\frac{\sqrt{(q - 1) (p - 1)}(\sqrt{ D}   + p + q - 1)}{ 2(p + q - 2)},  \\
\end{align*} 
where 
\[
D=(p+q)^2-10(p+q) +17.
\] 
Since
\[
\mu_2\mu_3= 2(p - 1)(q - 1)/(p + q - 2)>0,
\]  
if $D>0$, then both eigenvalues, $\mu_2$ and $\mu_3$  are negative. Otherwise, i.e., when $D<0$, both eigenvalues are complex conjugated and with negative real part.
 
Using Proposition \ref{prop:6singular_points}
   we have that singular point $p_5$   is a hyperbolic attractor of $\tilde Y_0$. It is a focus for $p+q<5+2\sqrt{2}<8$ and a node   for $p+q\geq 8$. 

By Invariant Manifold Theory, see \cite{fe}, \cite{hps} and \cite[Chapter 2]{pame}, the unstable manifold $W^u(P_5)$   is  smooth  and   one dimensional.

    \begin{figure}[H]
	\begin{center}
 	\def\svgwidth{0.9\textwidth}
 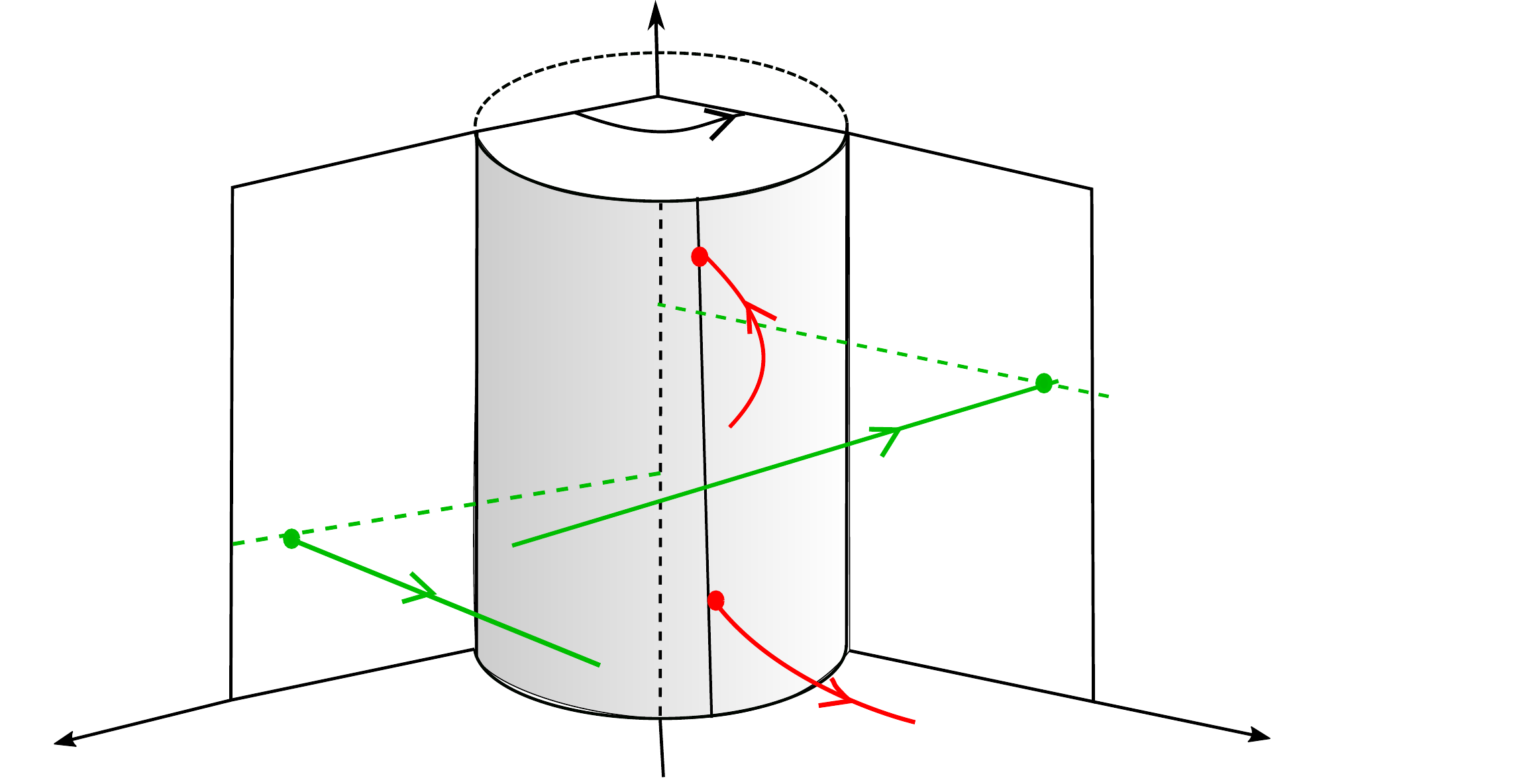
 	 	%
    %\includegraphics[trim=70 30 10 %0,clip,width=1.0\textwidth]{Figure %4_Blowing-up and invariant manifolds.pdf}
 	 	%\vskip .5cm
		\caption{ Blowing-up and the local invariant manifolds $W^u(P_5)$ and $W^s(P_6)$.}
		\label{fig:WsWuP5P6}
	\end{center}
\end{figure} 

The projections of the invariant manifolds $W^u(P_5)$ and 
$W^s(P_6)$ in a neighborhood of $0$ are as shown in Fig. \ref{fig:projWsWu} below.
    \begin{figure}[H]
	\begin{center}
 	\def\svgwidth{0.6\textwidth}
 %% Creator: Inkscape 1.2.2 (b0a8486541, 2022-12-01), www.inkscape.org
%% PDF/EPS/PS + LaTeX output extension by Johan Engelen, 2010
%% Accompanies image file '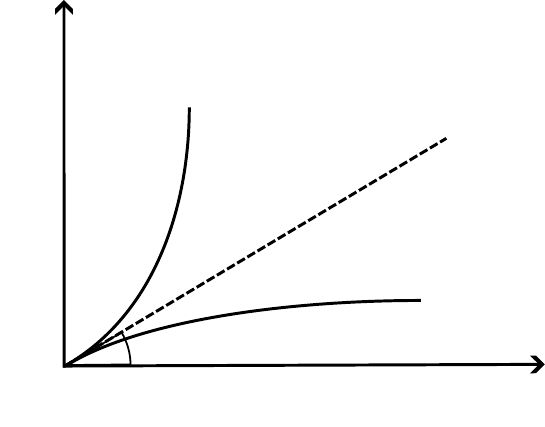' (pdf, eps, ps)
%%
%% To include the image in your LaTeX document, write
%%   \input{<filename>.pdf_tex}
%%  instead of
%%   \includegraphics{<filename>.pdf}
%% To scale the image, write
%%   \def\svgwidth{<desired width>}
%%   \input{<filename>.pdf_tex}
%%  instead of
%%   \includegraphics[width=<desired width>]{<filename>.pdf}
%%
%% Images with a different path to the parent latex file can
%% be accessed with the `import' package (which may need to be
%% installed) using
%%   \usepackage{import}
%% in the preamble, and then including the image with
%%   \import{<path to file>}{<filename>.pdf_tex}
%% Alternatively, one can specify
%%   \graphicspath{{<path to file>/}}
%% 
%% For more information, please see info/svg-inkscape on CTAN:
%%   http://tug.ctan.org/tex-archive/info/svg-inkscape
%%
\begingroup%
  \makeatletter%
  \providecommand\color[2][]{%
    \errmessage{(Inkscape) Color is used for the text in Inkscape, but the package 'color.sty' is not loaded}%
    \renewcommand\color[2][]{}%
  }%
  \providecommand\transparent[1]{%
    \errmessage{(Inkscape) Transparency is used (non-zero) for the text in Inkscape, but the package 'transparent.sty' is not loaded}%
    \renewcommand\transparent[1]{}%
  }%
  \providecommand\rotatebox[2]{#2}%
  \newcommand*\fsize{\dimexpr\f@size pt\relax}%
  \newcommand*\lineheight[1]{\fontsize{\fsize}{#1\fsize}\selectfont}%
  \ifx\svgwidth\undefined%
    \setlength{\unitlength}{261.50663841bp}%
    \ifx\svgscale\undefined%
      \relax%
    \else%
      \setlength{\unitlength}{\unitlength * \real{\svgscale}}%
    \fi%
  \else%
    \setlength{\unitlength}{\svgwidth}%
  \fi%
  \global\let\svgwidth\undefined%
  \global\let\svgscale\undefined%
  \makeatother%
  \begin{picture}(1,0.78302913)%
    \lineheight{1}%
    \setlength\tabcolsep{0pt}%
    \put(0,0){\includegraphics[width=\unitlength,page=1]{fig1.pdf}}%
    \put(0.04769787,0.74188898){\color[rgb]{0,0,0}\makebox(0,0)[lt]{\lineheight{1.25}\smash{\begin{tabular}[t]{l}$y$\end{tabular}}}}%
    \put(0.96002549,0.04929666){\color[rgb]{0,0,0}\makebox(0,0)[lt]{\lineheight{1.25}\smash{\begin{tabular}[t]{l}$x$\end{tabular}}}}%
    \put(0.2738447,0.13909956){\color[rgb]{0,0,0}\makebox(0,0)[lt]{\lineheight{1.25}\smash{\begin{tabular}[t]{l}$\alpha_0$\end{tabular}}}}%
    \put(0.38135886,0.60293318){\color[rgb]{0,0,0}\makebox(0,0)[lt]{\lineheight{1.25}\smash{\begin{tabular}[t]{l}$W^u$\end{tabular}}}}%
    \put(0.77967175,0.27161393){\color[rgb]{0,0,0}\makebox(0,0)[lt]{\lineheight{1.25}\smash{\begin{tabular}[t]{l}$W^s$\end{tabular}}}}%
    \put(0.88464855,0.44662805){\color[rgb]{0,0,0}\makebox(0,0)[t]{\lineheight{1.25}\smash{\begin{tabular}[t]{c}$y=(\tan\alpha_0)x$\end{tabular}}}}%
    \put(0.09720857,0.0744365){\color[rgb]{0,0,0}\makebox(0,0)[t]{\lineheight{1.25}\smash{\begin{tabular}[t]{c}$0$\end{tabular}}}}%
  \end{picture}%
\endgroup%

		\caption{ Projections of the  invariant manifolds $W^u(P_5)$ and $W^s(P_6)$. }
		\label{fig:projWsWu}
	\end{center}
\end{figure} 

 In order to   compute  the smooth invariant manifolds $W^u(P_5)$  we express  this regular curve  by Taylor's  series 
 \begin{align*}
     \theta(r)=& \alpha_0  +k_1r+k_2r^2+k_3r^3+O(r^4)\\
     \alpha(r)=& \alpha_0 +l_1 r+l_2r^2+l_3 r^3+O(r^4).
 \end{align*}
   
 The coefficients $(k_i,l_i),$ $i=1,2,3,$ will be obtained using the properties of invariant manifolds.
   
 For completeness, it will be included here the explicit calculations.

 First we write  the vector field $\dot r=Y_r, \dot \alpha=Y_\alpha, \dot \theta=Y_\theta$ given in the system of equations \eqref{eq:tilde-Y} as a system of  differential equation (elimination of the variable $t$) to obtain the two differential 1-forms $\omega_1$ and $\omega_2$:
\[
\left\{
 \begin{aligned}
     \omega_1&(r,\alpha,\theta,dr,d\alpha,d\theta)= Y_\alpha (r,\alpha,\theta)dr- Y_r (r,\alpha,\theta)d\alpha=0\\
     \omega_2&(r,\alpha,\theta,dr,d\alpha,d\theta)= Y_\theta (r,\alpha,\theta)dr- Y_r(r,\alpha,\theta)d\theta=0.
 \end{aligned}\right.
\]
 The kernel of $(\omega_1,\omega_2)$ is generated by $\tilde Y$ and so the solutions of $\omega_1=\omega_2=0$ are the orbits of the vector field $\tilde Y$.
 
Evaluating $ \omega_i(\theta(r),\alpha(r),dr,d\alpha) =0,$ for $i=1,2$, with 
\[
d\theta= (k_1+ 2k_2r+3k_3r^2+\cdots)dr
\] 
and 
\[
d\alpha=(l_1+ 2l_2r+3l_3r^2+\cdots)dr,
\]
and using the method of formal power series the coefficients $l_i$ and $k_i$ can be effectively computed.

In fact, from the equation above and the system of equations \eqref{eq:tilde-Y} it follows, after simplifications,
\[
\begin{aligned}
    \omega_1(r,\alpha(r),\theta(r),dr,d\theta)&= Y_\alpha(r,\alpha(r),\theta(r))dr\\
    &\quad- Y_r(r,\alpha(r),\theta(r))(l_1+ 2l_2r+3l_3r^2+\cdots)dr\\
    &=-rdr [ (k_1 - 2l_1)\sin(\alpha_0)\cos(\alpha_0)+O(r)]\\
    \omega_2(r,\alpha(r),\theta(r),dr,d\theta)&= Y_\theta(r,\alpha(r),\theta(r))dr\\
    &\quad- Y_r(r,\alpha(r),\theta(r))(k_1+ 2k_2r+3k_3r^2+\cdots)dr\\
    &=- rdr \left[(H-k_1)(q + p - 1)\right.\\
    &\quad\left.- (p+q - 2)l_1)\frac{\sqrt{(p - 1)(q - 1)}}{p+q-2}  +O(r)]\right. .
\end{aligned}
\]
Solving the linear system
\begin{align*}
  \left\{  \begin{aligned} &k_1 - 2l_1=0\\
&(H-k_1)(q + p - 1) - (p+q - 2)l_1=0,
\end{aligned}
  \right.
\end{align*}
it follows
 \[ 
 l_1= \frac{ H(p + q - 1)}{3(p + q) - 4},\;\; \;\;k_1=2l_1.
 \]
 Analogously,   developing $\omega_1$ and $\omega_2,$ up to order $r^2,$ it follows that
 \begin{equation*}
     \aligned
    % l_1=&\frac{ H(p + q - 1)}{3(p + q) - %4},\;\; \;\;k_1=2l_1,\;\; k_2=3l_2\\
    %
     l_2=& -\frac{H^2 (p + q - 2) (p - q) (p + q - 1)^2}{ 2 (2 p + 2 q - 1) (3 p + 3 q - 4)^2\sqrt{(p - 1)(q - 1)} },\;\; k_2=3l_2.
     \endaligned
 \end{equation*}
With this method we can obtain the Taylor's series of $\alpha(r)$ and $\theta(r)$. This ends the proof.
 \end{proof}

 \begin{proposition}\label{prop:WsP6}

      The point  $P_6=(0, \alpha_0,\alpha_0+\pi)$  is a  hyperbolic saddle of the vector field $\tilde Y$ given by the system of equations \eqref{eq:tilde-Y}. Moreover, $W^s(P_6)$ is locally parametrized by
\[
\left\{
\begin{aligned}
     \theta(r)=& \alpha_0 +  \pi +k_1r+k_2r^2+k_3r^3+O(r^4)\\
     \alpha(r)=& \alpha_0 +l_1 r+l_2r^2+l_3 r^3+O(r^4),
 \end{aligned}\right.
 \]
 where
  \begin{equation}
     \aligned
     l_1=&-\frac{ H(p + q - 1)}{3(p + q) - 4},\;\; \;\;k_1=2l_1,\;\; k_2=3l_2,\\
     l_2=&-  \frac{H^2 (p + q - 2) (p - q) (p + q - 1)^2}{ 2 (2 p + 2 q - 1) (3 p + 3 q - 4)^2\sqrt{(p - 1)(q - 1)} }.
     \endaligned
 \end{equation}
      \end{proposition}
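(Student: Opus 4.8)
The plan is to obtain Proposition \ref{prop:WsP6} from Proposition \ref{prop:WuP5} at essentially no extra cost, by producing a time-reversing symmetry of the blown-up field $\tilde Y$ that carries $P_5$ to $P_6$. Concretely, I would consider the linear involution
\[
\Phi(r,\alpha,\theta)=(-r,\ \alpha,\ \theta+\pi),\qquad D\Phi=\mathrm{diag}(-1,1,1),
\]
and verify the reversing relation $D\Phi\cdot\tilde Y=-\,\tilde Y\circ\Phi$. Since $\Phi(P_5)=\Phi(0,\alpha_0,\alpha_0)=(0,\alpha_0,\alpha_0+\pi)=P_6$, this single identity transfers hyperbolicity, the saddle type, and the full local structure of the invariant manifolds from $P_5$ to $P_6$, with the roles of stable and unstable interchanged.

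The verification of the reversing relation is a direct substitution in \eqref{eq:tilde-Y}. Under $\theta\mapsto\theta+\pi$ one has $\cos(\theta-\alpha)\mapsto-\cos(\theta-\alpha)$ and $\sin(\theta-\alpha)\mapsto-\sin(\theta-\alpha)$, so the factor $r\mapsto-r$ in $Y_r$ leaves $Y_r$ unchanged while $Y_\alpha$ changes sign; in $Y_\theta$ the two trigonometric terms $(p-1)\cos\alpha\cos\theta-(q-1)\sin\alpha\sin\theta$ change sign under $\theta\mapsto\theta+\pi$, and the term $\tfrac r2(p+q-1)H\sin 2\alpha$ changes sign under $r\mapsto-r$. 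Hence $\tilde Y\circ\Phi=(Y_r,-Y_\alpha,-Y_\theta)=-\,D\Phi\cdot\tilde Y$, as claimed. Differentiating this identity at the fixed point $P_5$ (where $\tilde Y$ vanishes) gives $D\tilde Y(P_6)=-\,D\Phi\cdot D\tilde Y(P_5)\cdot D\Phi$, so the eigenvalues of $D\tilde Y(P_6)$ are the negatives of the eigenvalues $\mu_1,\mu_2,\mu_3$ found in Proposition \ref{prop:WuP5}. As $\mu_1>0$ and $\mu_2,\mu_3$ have negative real part, the eigenvalues $-\mu_1<0$ and $-\mu_2,-\mu_3$ (positive real part) show that $P_6$ is a hyperbolic saddle whose one-dimensional stable manifold is transverse to $\{r=0\}$ and whose two-dimensional unstable manifold lies in $\{r=0\}$.

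Because a reversing symmetry interchanges stable and unstable manifolds, $W^s(P_6)=\Phi\bigl(W^u(P_5)\bigr)$. I would then simply push the parametrization of $W^u(P_5)$ from \eqref{eq:WuP5} through $\Phi$: writing the $P_5$ branch as $\theta(r)=\alpha_0+\sum_i k_i r^i$, $\alpha(r)=\alpha_0+\sum_i l_i r^i$, the map $\Phi$ replaces $r$ by $-r$ and adds $\pi$ to $\theta$, and after renaming the graphing parameter again by $r$ the odd-order coefficients are negated while the even-order ones are preserved. This produces $l_1\mapsto-l_1$, $k_1\mapsto-k_1=2(-l_1)$, $l_2\mapsto l_2$ and $k_2\mapsto k_2=3l_2$, which are exactly the coefficients displayed in the statement; in particular the sign of $l_1$ flips relative to $P_5$ while $l_2$ is unchanged.

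The step requiring the most care is the sign bookkeeping: confirming that $\Phi$ is a reversing symmetry (rather than an ordinary equivariance), and then correctly transporting the reparametrization $r\mapsto-r$ through the Taylor series so that precisely the odd coefficients flip. As a fully self-contained cross-check that avoids the symmetry altogether, one may instead rerun verbatim the formal-power-series scheme $\omega_1=\omega_2=0$ of Proposition \ref{prop:WuP5} at $P_6$, expanding $\theta$ about $\alpha_0+\pi$ and solving the same linear systems order by order; this reproduces the stated $l_1,k_1,l_2,k_2$ directly.
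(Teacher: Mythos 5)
Your proof is correct, but it takes a genuinely different route from the paper's. The paper disposes of Proposition \ref{prop:WsP6} with the single line ``Similar to that of Proposition \ref{prop:WuP5}'', i.e., it implicitly reruns the formal power-series scheme $\omega_1=\omega_2=0$ with $\theta$ expanded about $\alpha_0+\pi$, solving the resulting linear systems order by order a second time. You instead observe that the affine map $\Phi(r,\alpha,\theta)=(-r,\alpha,\theta+\pi)$ is a time-reversing symmetry of $\tilde Y$: the identity $D\Phi\cdot\tilde Y=-\tilde Y\circ\Phi$ does hold (your three sign checks against \eqref{eq:tilde-Y} are exactly right), and $\Phi(P_5)=P_6$. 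This transfers hyperbolicity (the eigenvalues of $D\tilde Y(P_6)$ are $-\mu_1,-\mu_2,-\mu_3$, so $P_6$ is a saddle with one-dimensional stable manifold), exchanges stable and unstable manifolds, and pushes the parametrization \eqref{eq:WuP5} of $W^u(P_5)$ to that of $W^s(P_6)$ with precisely the claimed sign pattern: the reparametrization $r\mapsto-r$ negates odd-order Taylor coefficients and fixes even-order ones, so $l_1\mapsto-l_1$, $l_2\mapsto l_2$, while the relations $k_1=2l_1$ and $k_2=3l_2$ are preserved. Two small terminological slips ($\Phi$ is affine rather than linear, and an involution only modulo the $2\pi$-periodicity in $\theta$; and $P_5$ is a singular point of $\tilde Y$, not a fixed point of $\Phi$) do not affect the argument, since $D\Phi^{-1}=D\Phi$ is all that is used. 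What your symmetry buys is a structural explanation, with no new computation, of why Propositions \ref{prop:WuP5} and \ref{prop:WsP6} differ only by the shift of $\pi$ in $\theta$ and the signs of the odd coefficients; what the paper's recomputation buys is logical independence from Proposition \ref{prop:WuP5}, though since the statement itself is phrased in terms of those same coefficients, your reduction is perfectly legitimate. Your closing remark that one could rerun the series scheme verbatim at $P_6$ is exactly the paper's intended proof.
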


      \begin{proof} Similar to that of Proposition \ref{prop:WuP5}.
      
\end{proof} 

\begin{proposition}\label{prop:sing_normally}
    The lines $\ell_i=\{(r,p_i)\} $   $(i=1,\ldots, 4)$ are normally hyperbolic of saddle type of the vector field $\tilde Y$. See Fig. \ref{fig:invariantes_surfaces}.
    %Therefore, only $W^u(0)=\pi(W^u(P_5))$ and 
 %$W^s(0)=\pi(W^u(P_6))$ are asymptotic to the origin.
\end{proposition}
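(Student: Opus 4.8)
The plan is to verify first that each $\ell_i$ is an invariant line consisting \emph{entirely} of equilibria of $\tilde Y$, and then to read off the transverse linearization and check that it has one positive and one negative eigenvalue, uniformly in $r$. This reduces ``normally hyperbolic of saddle type'' to an eigenvalue computation for a $2\times 2$ block.

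First I would substitute the coordinates of $p_1,\dots,p_4$ into \eqref{eq:tilde-Y}. On $\ell_1,\ell_2$ we have $\alpha=0$, and on $\ell_3,\ell_4$ we have $\alpha=\pi/2$, so in all four cases $\sin 2\alpha=0$; hence $Y_r=\tfrac12 r\sin 2\alpha\,\cos(\theta-\alpha)=0$, while $Y_\alpha=0$ and $Y_\theta=0$ were already verified in the proof of Proposition~\ref{prop:6singular_points} for the $(\alpha,\theta)$-components. Thus every point of $\ell_i$ is a singular point of $\tilde Y$, so each $\ell_i$ is an invariant curve of equilibria, and its tangent direction $\partial_r$ lies in the kernel of the linearization at each of its points.

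Next I would compute $D\tilde Y(r,p_i)$. Because $\sin 2\alpha\equiv 0$ along each line, the entire first column $\partial_r\tilde Y=(\partial_r Y_r,\partial_r Y_\alpha,\partial_r Y_\theta)$ vanishes there: indeed $\partial_r Y_r=\tfrac12\sin 2\alpha\,\cos(\theta-\alpha)$, $\partial_r Y_\alpha\equiv 0$, and $\partial_r Y_\theta=\tfrac12(p+q-1)H\sin 2\alpha$. Consequently $\partial_r$ is an eigenvector with eigenvalue $0$, the center direction tangent to $\ell_i$, and the normal behaviour is governed by the $2\times 2$ block $B_i$ acting on the $(\alpha,\theta)$-variables. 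Evaluating the relevant partials of $Y_\alpha$ and $Y_\theta$ gives, at $p_1=(0,\pi/2)$,
\[
B_1=\begin{pmatrix} 1 & 0\\ r(p+q-1)H-(q-1) & -(p-1)\end{pmatrix},
\]
together with analogous triangular matrices at the remaining three points, the off-diagonal entry depending affinely on $r$.

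Since $B_1$ is triangular, its eigenvalues are $1$ and $-(p-1)$ for every $r$; the same computation yields transverse eigenvalues $\{1,-(p-1)\}$ on $\ell_2$ and $\{1,-(q-1)\}$ on $\ell_3,\ell_4$. As $p,q\ge 2$, in each case one eigenvalue is positive and the other negative, and both are bounded away from the imaginary axis uniformly in $r$. By the invariant-manifold theory for normally hyperbolic invariant sets (\cite{fe},\cite{hps}), each $\ell_i$ is therefore normally hyperbolic of saddle type, carrying a one-dimensional stable and a one-dimensional unstable fibration transverse to the line. I expect the only delicate point to be purely clerical---tracking the $r$-dependent off-diagonal term---and it is harmless precisely because each $B_i$ is triangular, so the eigenvalues are the constant diagonal entries; the periodicity of $\tilde Y_0$ further lets me reduce the four evaluations to the two representative cases $p_1$ and $p_3$.
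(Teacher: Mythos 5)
Your proof is correct and follows essentially the same strategy as the paper: show that each $\ell_i$ consists of equilibria of $\tilde Y$, identify $\partial_r$ as the zero (tangent) direction, check that the transverse linearization is a saddle, and invoke invariant manifold theory. The one substantive difference is where the transverse eigenvalues are computed. The paper evaluates $D\tilde Y$ only on the divisor $\{r=0\}$, notes that its nonzero eigenvalues coincide with those of $D\tilde Y_0(p_i)$ from Proposition \ref{prop:6singular_points}, and extends along the line ``by continuity''; you instead compute $D\tilde Y(r,p_i)$ for every $r\geq 0$ and observe that the transverse $(\alpha,\theta)$-block is triangular with $r$-independent diagonal, so the transverse eigenvalues are literally constant along the whole line. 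Your version is slightly stronger: it yields \emph{uniform} normal hyperbolicity along the entire half-line at no extra cost, whereas continuity from $r=0$ by itself only controls a neighborhood of the divisor or compact pieces of the line. One clerical slip: since $D\tilde Y_0(p_2)=-D\tilde Y_0(p_1)$ and $D\tilde Y_0(p_4)=-D\tilde Y_0(p_3)$, the transverse eigenvalues on $\ell_2$ are $\{-1,\,p-1\}$ and on $\ell_4$ are $\{-1,\,q-1\}$, not the same sets as on $\ell_1$ and $\ell_3$; this sign reversal is immaterial for the saddle conclusion, since in every case one eigenvalue is positive and one negative. Finally, the paper's proof goes on to record a consequence (that only the projections of $W^u(P_5)$ and $W^s(P_6)$ are asymptotic to the origin) which is used later in concluding Theorem \ref{thm:main-1}; that claim is not part of the proposition as stated, so its absence from your argument is not a gap.
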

    \begin{figure}[H]
	\begin{center}
 	\def\svgwidth{1.0\textwidth}
 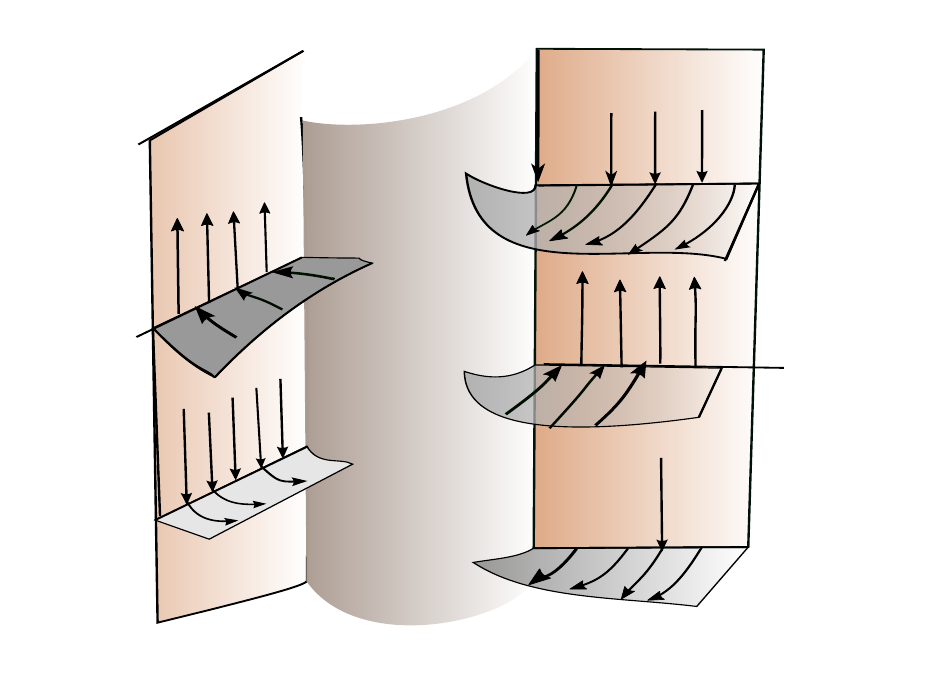
		\caption{ Blowing-up and phase portrait near the cylinder. It is shown the invariant surfaces (stable and unstable) of the normally hyperbolic lines and of the hyperbolic singular points. }
		\label{fig:invariantes_surfaces}
	\end{center}
\end{figure} 
\begin{proof} From the definition of $\tilde Y$, see equation \eqref{eq:tilde-Y}, it follows that $\tilde Y(r,p_i)=0$.
The nonzero eigenvalues of $D\tilde Y(0,p_i)$ are the same of $D\tilde Y_0(p_i)$ $(i=1,\ldots, 4)$ which are hyperbolic saddles.
By continuity and invariant manifold theory it follows that   $W^s(\ell_i)$ and $W^u(\ell_i)$ are invariant two dimensional surfaces, fibered by integral curves of $\tilde Y$ which are asymptotic to singular lines $\ell_i$. Therefore, only $W^u(0)=\pi(W^u(P_5))$ and 
 $W^s(0)=\pi(W^u(P_6))$ are asymptotic to the origin.
\end{proof}

 \begin{proposition}\label{prop:projecao_WsWu}

  The projections of $W^u(P_5)$ and $W^s(P_6)$, see Figure \ref{fig:WsWuP5P6}, are regular curves in the plane $xy$ and they are tangent at the origin $(0,0)$ to the straight line $y=\tan\alpha_0 x$ and they are as shown in Figure \ref{fig:main-theorem}. Moreover, the curvature of $W^u(0)=\pi(W^u(P_5))$ at $(0,0)$ is given by
  \[ k_u(0)=\frac{2H(p + q - 1)}{ 3p + 3q - 4}>0\]
 and the curvature of the planar curve $W^s(0)=\pi(W^s(P_6))$ at $(0,0)$
  is given by  $k_s(0)=-k_u(0)<0$. Here $\pi(x,y,\theta)=(x,y)$.
 \end{proposition}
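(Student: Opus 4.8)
The plan is to feed the local parametrizations of $W^u(P_5)$ and $W^s(P_6)$ produced in Propositions \ref{prop:WuP5} and \ref{prop:WsP6} through the blowing-up map $R(r,\alpha,\theta)=(r\cos\alpha,\,r\sin\alpha,\,\theta)$ and then through $\pi(x,y,\theta)=(x,y)$. Concretely, since $W^u(P_5)$ is the graph $r\mapsto(r,\alpha(r),\theta(r))$ in the blown-up coordinates, its projection $W^u(0)=\pi(W^u(P_5))$ is the planar curve
\[
\beta(r)=(x(r),y(r))=\bigl(r\cos\alpha(r),\,r\sin\alpha(r)\bigr),\qquad \alpha(r)=\alpha_0+l_1 r+l_2 r^2+O(r^3),
\]
with $l_1=H(p+q-1)/(3(p+q)-4)$. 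The same formula describes $W^s(0)=\pi(W^s(P_6))$, with $l_1$ replaced by $-H(p+q-1)/(3(p+q)-4)$ according to Proposition \ref{prop:WsP6}.

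First I would settle regularity and tangency. Differentiating $\beta$ gives $\dot x=\cos\alpha(r)-r\alpha'(r)\sin\alpha(r)$ and $\dot y=\sin\alpha(r)+r\alpha'(r)\cos\alpha(r)$; the terms carrying the factor $r$ drop out at $r=0$, so $\beta'(0)=(\cos\alpha_0,\sin\alpha_0)\neq 0$. Thus each branch is a regular curve issuing from the origin with tangent line $y=(\tan\alpha_0)x$, which proves the tangency assertion. Regularity away from the origin is automatic, because the orbits of $\tilde Y$ project onto orbits of the unit-speed field $X$ in $Q_1$, whose $(x,y)$-component $(\cos\theta,\sin\theta)$ never vanishes.

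Next I would compute the signed curvature at $r=0$ from $k(0)=(\dot x\ddot y-\ddot x\dot y)/(\dot x^2+\dot y^2)^{3/2}$. A direct second differentiation yields $\ddot x(0)=-2l_1\sin\alpha_0$ and $\ddot y(0)=2l_1\cos\alpha_0$, because $\alpha'(0)=l_1$. Hence the numerator equals $2l_1(\cos^2\alpha_0+\sin^2\alpha_0)=2l_1$ and the denominator equals $1$, giving $k(0)=2l_1$. Substituting the value of $l_1$ from Proposition \ref{prop:WuP5} gives $k_u(0)=2H(p+q-1)/(3p+3q-4)>0$, and substituting the opposite value from Proposition \ref{prop:WsP6} gives $k_s(0)=-k_u(0)<0$, as claimed.

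The genuinely hard part of this statement was already carried out in Propositions \ref{prop:WuP5} and \ref{prop:WsP6}, namely the extraction of the first-order coefficient $l_1$ of the invariant-manifold expansions; the present computation is then elementary. The only point deserving care is that the blow-up factor $r$ multiplying $\cos\alpha(r)$ and $\sin\alpha(r)$ forces the first derivatives at $r=0$ to depend solely on $\alpha_0$ (yielding a nonzero regular tangent) while the curvature first feels $l_1$ through the second derivatives. I would close by observing that both branches leave the origin in the common tangent direction $(\cos\alpha_0,\sin\alpha_0)$ but with curvatures of opposite sign, which is exactly the cusp configuration drawn in Figure \ref{fig:main-theorem}.
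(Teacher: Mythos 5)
Your proposal is correct and follows essentially the same route as the paper: both push the Taylor expansions $\alpha(r)=\alpha_0+l_1r+l_2r^2+\cdots$ from Propositions \ref{prop:WuP5} and \ref{prop:WsP6} through the blow-down $(x,y)=(r\cos\alpha(r),r\sin\alpha(r))$, read off the tangent $(\cos\alpha_0,\sin\alpha_0)$ from the first derivative at $r=0$, and obtain $k_u(0)=2l_1$ (hence $k_s(0)=-k_u(0)$) from the second derivative. Your explicit evaluation of the curvature formula $(\dot x\ddot y-\ddot x\dot y)/(\dot x^2+\dot y^2)^{3/2}$ and the remark on regularity away from the origin only spell out details the paper leaves implicit.
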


 \begin{proof}
 The projections of $W^u(P_5)$ and $W^s(P_6)$, see Figure \ref{fig:WsWuP5P6}, in the plane $xy$ are tangent at the origin $(0,0)$ to the straight line $y=\tan\alpha_0 x$ and they are as shown in Figure \ref{fig:main-theorem}.

 In fact, from the cylinder blowing-up we have that
 $x=r\cos\alpha$ and $y=r\sin\alpha$.

 Therefore, the projection of $W^u(P_5)$ is given by
 \[\Gamma^u(r)=(r\cos\alpha(r),r\sin \alpha(r)),\]
where
\[ \alpha(r)=\alpha_0+l_1r+l_2r^2+l_3r^3+\cdots \]
is as given in Proposition \ref{prop:projecao_WsWu}. Then
 \begin{align*}   
 \frac{d}{dr}(\Gamma^u(r))(0)&= (\cos\alpha_0,\sin\alpha_0)=\left(\sqrt{\frac{q - 1}{p + q - 2}} , \sqrt{\frac{p - 1}{p + q - 2}}\right) \\
  \frac{d^2}{dr^2}(\Gamma^u(r))(0)&=    \frac{2 H(p + q - 1)}{3(p + q) - 4} \left(-\sqrt{\frac{p - 1}{p + q - 2}} , \sqrt{\frac{q - 1}{p + q - 2}}
  \right).
 \end{align*}
Therefore, the  curvature of $\Gamma^u(r)$ %$W^u(0)=\pi(W^u(P_5))$ 
is given by \[k_u=2l_1+ 6l_2r+O(r^2),\] which implies  

\[ k_u(0)=\frac{2H(p + q - 1)}{ 3p + 3q - 4}>0.\]

%\frac{36(p+q-1)H(\alpha_0^4+12)}{(3p+3q-4)(\alpha_0^6-3\alpha_0^4+36)^{\frac{3}{2} }}>0\]
 
 The curvature of $W^s(0)=\pi(W^s(P_6))$ at $(0,0)$
 can be evaluated similarly, using Proposition \ref{prop:WsP6}.  Performing the calculations it follows that  $k_s=-k_u<0$. 
\end{proof}

Now, we are able to conclude the proof of Theorem \ref{thm:main-1}.

\begin{proof}[Conclusion of the proof of Theorem \ref{thm:main-1}]

By Propositions  \ref{prop:sing_normally} and \ref{prop:projecao_WsWu}  the  projections of $ W^u(P_5)$  and $W^u(P_6)$ in the plane $xy$ are the   unique solutions asymptotic to the origin.
These projections are obtained doing the blowing down of the orbits of the vector field $\tilde Y$ in the neighborhood of the divisor $\{r=0\}$.

By   \cite[Corollary 1, page 347]{hsiang} the curve $W^u=W^u(0)$ is regular and asymptotic to the line 
$x=(q-1)/(p+q-1) $ at infinity and $W^s=W^s(0)$ is asymptotic to the line $y=(p-1)/(p+q-1) $ at infinity. From this analysis, it follows that the unique solutions asymptotic to the origin are as shown in  Fig. \ref{fig:main-theorem}. 
\end{proof}

\section{Proof of Theorem \ref{thm:main-2}}

Theorem \ref{thm:main-2} is almost a immediate consequence of Theorem \ref{thm:main-1}, but we include a proof here since, in this case, the projections of $W^u(P_5)$ and $W^u(P_6)$ are degenerate and this fact was not proven in the last section. Moreover, we can calculate explicitly these projections, obtaining the cones as described in Theorem \ref{thm:main-2}.
 
\begin{proof}[Proof of Theorem \ref{thm:main-2}]

When $H=0$ the integral curves of $Y$ are invariant by homotheties. As in Proposition \ref{prop:Y-tilde} the pullback of $Y$ by the cylindrical blowing up $R$ is given by the vector field $\tilde Y$ given by
   \begin{equation}
       \left\{\aligned
        Y_r&=r\sin\alpha\cos\alpha\cos(\theta-\alpha)\\
       Y_\alpha &= \sin\alpha\cos\alpha \sin(\theta-\alpha)\\
      Y_\theta&= (p-1)\cos\theta\cos\alpha-(q-1)\sin\alpha\sin\theta .
       \endaligned\right.
      \label{eq:blowingupHzero}
   \end{equation}

The unstable manifold $W^u(P_5)$ of $P_5$ and stable manifold $W^s(P_6)$ of $P_6$ are one dimensional and when $H=0$ they are straight lines, see Figure \ref{fig:WsWuH0}, i.e.,

\[ W^u(P_5)=\{(r(s),\alpha_0,\alpha_0)= \{ ( r_0 e^{\mu_1s} ,\alpha_0,\alpha_0) , r_0>0\}.\]
Analogously,
\[ W^s(P_6)=\{ (  r_0 e^{-\mu_1s},\alpha_0,\alpha_0+\pi),  r_0>0\}.\]

Here $\mu_1=\sin\alpha_0\cos\alpha_0=  \sqrt{(q - 1) (p - 1)}/( p + q - 2) >0$ is an eigenvalue of $D\tilde Y(P_5)$. The projection of both is the  half straight line $y=\tan\alpha_0 x$, $x> 0$ and  it is shown in Figure \ref{fig:main-theorem}.

 \begin{figure}[H]
	\begin{center}
  \def\svgwidth{1.0\textwidth}
 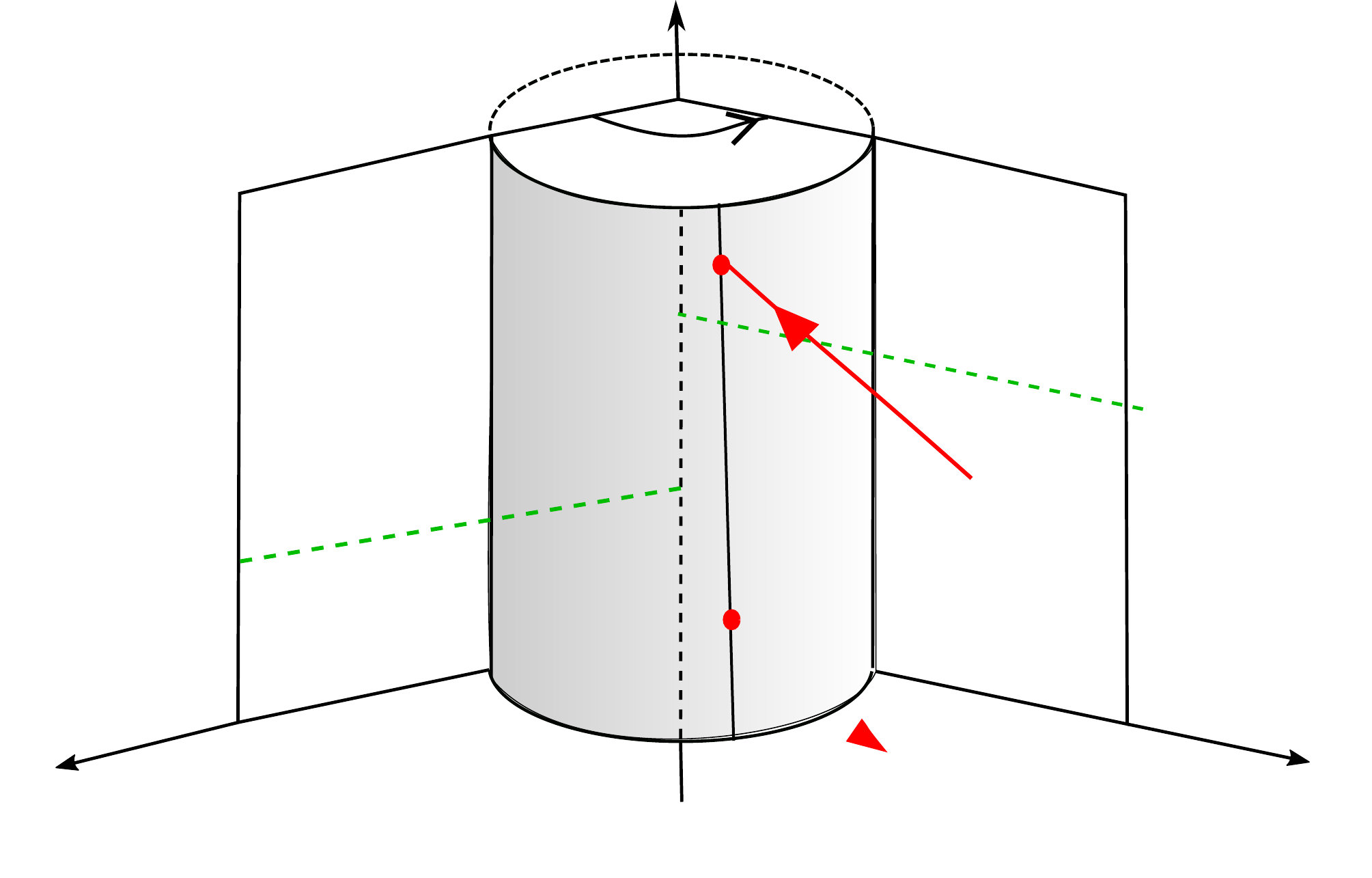
 	 	\vskip .5cm
		\caption{Blowing-up and invariant manifolds when $H=0$. In this case the invariant manifolds $W^u(P_5)$ and $W^s(P_6)$ are straight lines.}
		\label{fig:WsWuH0}
	\end{center}
\end{figure}

 Thus, we have that $\pi(W^u(P_5))$ is parametrized by
 \[ \Gamma^u(s)=e^{\mu_1s} (\cos\alpha_0 ,  \sin\alpha_0)\]
 and 
  $\pi(W^s(P_6))$ is parametrized by
 \[ \Gamma^s(s)=e^{-\mu_1s} \left(\cos\alpha_0 ,  \sin\alpha_0\right).\]

Therefore, when $H=0$ the projections of the integral curves of $Y$ in the plane $xy$ passing through the origin are the half lines $y=\tan\alpha_0 x$.

In order to conclude the proof, notice that the line $y=\tan\alpha_0 x$ can be expressed in the parametric form
\[
\left\{\begin{aligned}
x(t)&=\cos\alpha_0 t=t \sqrt{q-1}/\sqrt{p+q-2},\\
y(t)&=\sin\alpha_0 t=t \sqrt{p-1}/\sqrt{p+q-2}.
\end{aligned}\right.
\] 
 Computing the generated hypersurface invariant by $O(p)\times O(q),$ we obtain
that it is the cone
 given by
 \[ C=\{ (U,V)\in \mathbb{R}^p\times\mathbb{R}^q ;(q-1)|V|^2=(p-1)|U|^2\}. \]
 The evaluation  of the principal curvatures is direct from equation \eqref{eq:principal_curvatures}.
\end{proof}

\subsection*{Acknowledgements}

Hil\'ario Alencar, Ronaldo Garcia and Greg\'orio Silva Neto were Partially supported by the National Council for Scientific and Technological Development--CNPq of Brazil. Greg\'orio Silva Neto was also partially supported by the Alagoas Research Foundation of Brazil. 

 \bibliographystyle{plain}

\vskip .5cm
\newpage
\author{\noindent  Hilário Alencar\\
Instituto de Matemática\\
Universidade Federal de Alagoas,\\
57072--970, Maceió, Brazil}

\noindent{e-mail: hilario@mat.ufal.br}

 \vskip 0.5cm

\author{\noindent Ronaldo Garcia\\
Instituto de Matem\'atica e Estat\'{\i}stica \\
Universidade Federal de Goi\'as,\\
74690--900, Campus Samambaia\\
Goi\^ania, Brazil}

 \noindent{e-mail: ragarcia@ufg.br}

 \vskip 0.5cm

\author{\noindent  Gregório Silva Neto\\
Instituto de Matemática\\
Universidade Federal de Alagoas\\
57072--970,
Maceió, Brazil}

\noindent{e-mail: gregorio@im.ufal.br}

 \end{document}